\let\footnote=\endnote
\newcommand{\beq}{\begin{equation}}
\newcommand{\eeq}{\end{equation}}
\newcommand{\beqa}{\begin{eqnarray}}
\newcommand{\eeqa}{\end{eqnarray}}
\newcommand{\beqas}{\begin{eqnarray*}}
\newcommand{\eeqas}{\end{eqnarray*}}
\newcommand{\ei}{\end{itemize}}
\newcommand{\ba}{\begin{array}}
\newcommand{\ea}{\end{array}}
\newcommand{\bn}{\begin{eqnarray}}
\newcommand{\en}{\end{eqnarray}}
\newcommand{\bns}{\begin{eqnarray*}}
\newcommand{\ens}{\end{eqnarray*}}
\newcommand{\E}{{\mathbb{E}}}
\newcommand{\bbr}{\Bbb{R}}
\newcommand{\bbe}{\Bbb{E}}
\newcommand{\nn}{\nonumber}
\def\eqnok#1{(\ref{#1})}
\newcommand{\argmin}{\operatornamewithlimits{argmin}}
\def\vgap{\vspace*{.1in}}
\newtheorem{lemma}{Lemma}[section]
\newtheorem{corollary}{Corollary}[section]
\newtheorem{theorem}{Theorem}[section]
\newtheorem{proposition}{Proposition}[section]
\begin{document}

\title{Stochastic Search for a Parametric Cost Function Approximation: Energy storage with rolling forecasts}
\author{{Saeed Ghadimi\thanks{Corresponding author}\thanks{{ Department of Management Sciences, University of Waterloo, Waterloo, ON, N2L 3G1, Canada (email: sghadimi@uwaterloo.ca)}}, and
Warren B. Powell\thanks{{{}Department of Operations Research and Financial Engineering, Princeton University, Princeton, NJ, 08544, USA (email: powell@princeton.edu)}}{}}}

\maketitle

\begin{abstract}
Rolling forecasts have been almost overlooked in the renewable energy storage literature. In this paper, we provide a new approach for handling uncertainty not just in the accuracy of a forecast, but in the evolution of forecasts over time. Our approach shifts the focus from modeling the uncertainty in a lookahead model to accurate simulations in a stochastic base model. We develop a robust policy for making energy storage decisions by creating a parametrically modified lookahead model, where the parameters are tuned in the stochastic base model. Since computing unbiased stochastic gradients with respect to the parameters require restrictive assumptions, we propose a simulation-based stochastic approximation algorithm based on numerical derivatives to optimize these parameters. While numerical derivatives, calculated based on the noisy function evaluations, provide biased gradient estimates, an online variance reduction technique built in the framework of our proposed algorithm, will enable us to control the accumulated bias errors and establish the finite-time rate of convergence of the algorithm. Our numerical experiments show the performance of this algorithm in finding policies outperforming the deterministic benchmark policy.
\end{abstract}

{\bf{Keywords:}}
stochastic programming, energy storage, simulation optimization, parametric cost function approximation, rolling forecast.

\section{Introduction}

Over the past $20$ years, wind and solar have become important sources of clean energy and are becoming cost competitive with fossil fuels, but at a price of dealing with variability and uncertainty. For example, the total output of all the wind farms for PJM can drop from $5000$ MW to $1000$ MW in an hour or two. Overestimating the available supply from these sources may result in paying high prices to buy energy from other sources or not satisfying the demand. On the other hand, underestimation can result in missing these clean sources of energy.

In this paper, we focus on handling the uncertainty of energy from wind, where forecasts are notoriously inaccurate. There are many papers that handle the uncertainty in forecasts by solving stochastic models, but this prior work has ignored the presence of {\it rolling forecasts} which are updated every $10$ minutes. Classical methods based on Bellman's equation are not able to handle this, because it means that the forecast has to be included in the state variable, which dramatically increases the complexity of the value function. 

Standard solution strategies tend to fix a forecast and hold it constant. A deterministic lookahead will assume the forecast is perfect, while classical stochastic models will capture the error in the forecast, but ignoring the ability to continuously update the forecast within the lookahead model. Fixing the forecast in the lookahead model means that it is being treated as a latent variable. Not only does this eliminate any ability to claim optimality, it can introduce significant errors, since it ignores the ability to delay making a decision now to exploit more accurate forecasts later. 

We introduce a new approach that uses a parameterized {\it deterministic} lookahead, but where the parameters are tuned in a simulator that fully captures the rolling forecasts. Thus, we shift the emphasis from creating and solving a more realistic lookahead model, to tuning an approximate lookahead model in a more accurate simulator. The approach is simple, practical and produces results that are significantly better than a classical deterministic lookahead using rolling forecasts.

There is a body of literature that focuses on developing energy storage policies. Here, we just present a list of them based on the three widely used policies: exact or approximate value functions \cite{JiaPow15,SioMad14,XiaSio16,Xi2014ASD}, policy function approximations (such as ``buy low, sell high'' policy) \cite{ChaArGr18,TriRic19}, and model predictive control \cite{AlmaHis15,KiaLotf18,KuWeElDr18,ZafPot18}. To the best of our knowledge, in all existing energy storage models, forecasting is either ignored, or the set of forecasts over the entire horizon are fixed (see e.g., \cite{Zhang18,Dicorato12}). This assumption undermines real world problems where sources of uncertainty, in particular from renewable sources, are changing every few minutes. This highlights a broader challenge in sequential stochastic decision making problems when a given forecast for a source of uncertainty is updated frequently over the time horizon.

Several approaches have been proposed to generate forecasts of different sources of uncertainty in energy systems (which then assumed to be fixed over the horizon). For example, time series models~\cite{Benth07,Taylor05} and neural networks~\cite{ABHISHEK12,KhDaMa96}, have been used to forecast the weather (temperature) which can affect both demand and supply. There are also different approaches in forecasting renewable energy like solar radiation~\cite{Akarslan14,ARBIZ17} and wind speed~\cite{LIU2010,Justin12}. More recently, a vector autoregression model is also proposed in \cite{LIU2018} for forecasting temperature, wind speed, and solar radiation.

Our approach formalizes the idea that has been widely used in industry that an effective way to solve complex stochastic optimization problems is to shift the modeling of uncertainty from a lookahead approximation to the stochastic base model which is captured by a simulator that includes the updating of rolling forecasts, as well as capturing any other dynamics relevant to the problem. We have first presented this idea more conceptually in \cite{powelghad2022} for sequential decision making problems under uncertainity with updating forecasts. This approach, which we call parametric cost function approximations (CFAs), requires that we a) design a parameterized deterministic optimization problem and b) tune the parameters in a simulator. While the idea of parameterized policies is well known in the form of linear decision rules (also called affine policies), step functions such as order-up-to rules for inventory problems,  or even neural networks, our idea of parameterizing an optimization model is new to the stochastic optimization community. We do not minimize the challenges of the two aforementioned steps, but they are done offline, and represent the research required to design a policy that is both robust yet no more difficult to compute than basic deterministic lookaheads. In this paper, we mainly focus on applying this idea to an energy storage problem under the presence of rolling forecasts and discuss its associated computational challenges.

We make three main contributions in this paper. First, we apply the idea of using parametric CFA to handle uncertainty in the context of an energy storage problem with rolling forecasts. In contrast with a basic parametric model, our parameterized optimization model performs critical scaling functions and makes it possible to handle high-dimensional decisions. 
Second, we present a new simulation-based stochastic approximation (SA) algorithm, based on the Gaussian random smoothing technique, to optimize (tune) the parameters in the parametric CFA model while using only two function evaluations at each iteration. Our proposed algorithm is equipped with an online variance reduction technique which makes it more robust than the vanilla stochastic gradient method using numerical derivatives. Furthermore, we establish the finite-time convergence of this algorithm and show that its sample complexity is in the same order of the one presented in \cite{NesSpo17} with slightly better dependence on the problem parameters, when applied to nonsmooth nonconvex problems.
Finally, we propose several policies for parameterization of the CFA model and show that, when optimized with our proposed algorithm,  they can outperform a deterministic benchmark policy using vanilla point forecasts for an energy storage problem.

The rest of this paper is organized as follows. We discuss the issue of rolling forecasts and its importance in sequential decision making under uncertainty in Section~\ref{rolling_fcst}. We then present our energy storage model in Section~\ref{sec_energy_storage}. We discuss solution strategies in Section~\ref{Policies} and present our parametric CFA approach. We also propose a stochastic policy search algorithm to optimize the parameters within the parametric CFA model in Section~\ref{sec_algorithm} and establish its finite-time rate of convergence. We further show the performance of this algorithm in optimizing the aforementioned policies for an energy storage problem in Section \ref{sec_experiments} and  conclude the paper with some remarks in Section~\ref{sec_conclusion}.

\section{Rolling Forecasts}\label{rolling_fcst}
The problem of planning in the presence of rolling forecasts, which exhibit potentially high errors, is difficult and has been largely overlooked in the energy storage literature. A simple fact for the rolling forecasts is having accumulated noise as we predict far more in the future. For example, at time $t$, denoting the forecast of energy available from wind for time $t'$ by $\{f^E_{t,t'}\}_{t' \ge t}$ and assuming that $\{f^E_{0,t'}\}_{t'=0,\ldots,\min(H,T)}$ is given, one can generate the forecasts as
\begin{align}
f^E_{t+1,t'} &= f^E_{t,t'}+\epsilon_{t+1,t'} \qquad t=0,\ldots,T-1,\label{wind_forecast}\\
&t'=t+1,\ldots,\min(t+H,T),\nn
\end{align}
where $T$ is the problem horizon, $H$ is the size of the lookahead, $\epsilon_{t+1,t'}\sim {\cal N}(0,\sigma^2_\epsilon)$, and $\sigma_\epsilon$ depends on $\rho_E f^E_{t,t'}$ for some constant $\rho_E$. This model is usually known as the``martingale model of forecast evolution'' (see e.g., \cite{GravDaQu86,HeathJackson94,Sapra2004TheME}).

The standard approach to handling forecasts is to fix them over the planning horizon (ignoring the reality that they will actually be changing) and optimize over a deterministic future. However, an optimal policy would require modeling the evolution of forecasts over time, something that we have never seen done in a lookahead model. An alternative is to fix the forecast (say, at time $t$) over a horizon $t' \in \{t,t+1, ...,t+H\}$ and solve a stochastic dynamic program. With this strategy, the forecast becomes a {\it latent variable} in the lookahead model.  These are computationally difficult, and it would be hard doing this, for example, every $5-10$ minutes as might be required for an energy storage problem.

The failure to capture rolling forecasts represents a more significant modeling error than has been recognized in the research literature. Fixing the forecast as a latent variable ignores our ability to wait to make decisions at a later time with a more accurate forecast. Properly modeling rolling forecasts and their associated errors represents a surprisingly complex challenge in a lookahead model. If we have a rolling forecast extending $24$ hours into the future, including the forecast into the state variable introduces a $24$-dimensional component of the state variable into the model, \emph{without any particular structure that we can exploit}.

\emph{Indeed, there is an important tradeoff: including a dynamically varying forecast in the state variable produces a more complex, higher dimensional state variable, but one which does not have to be re-optimized when the forecast changes. By contrast, treating the forecast as a latent variable, as it has been done in the classical dynamic programming models using Bellman's equation, simplifies the model, but requires that the model be re-optimized when it changes.}

For this reason, we are going to adopt a completely different approach.  Rather than developing a more accurate lookahead model, we are going to use a parameterized, deterministic lookahead model, where the parameters are tuned in the simulator which captures the updating of rolling forecasts. While the parameterization needs to be carefully designed, this strategy shifts the focus from solving a complex lookahead model to using a realistic simulator, where it is much easier to handle complex dynamics. We discuss in more detail the base model and lookahead models in Section~\ref{Policies}.

\section{Energy Storage Model}\label{sec_energy_storage}
In this section, we describe an energy storage model involving rolling forecasts of wind. Assume that a smart grid manager must satisfy a recurring power demand with a stochastic supply of renewable energy, unlimited supply of energy from the main power grid at a stochastic price, and access to local rechargeable storage devices. At the beginning of each period, the manager must combine energy from different sources to satisfy the demand.

We now formally present our energy storage model by introducing the five key elements of sequential decision making under uncertainty~\cite{Powell2019,SO_Powell}, namely, state variables, decision variables, exogenous information variables, the transition function, and the objective function.

\vgap

{\bf The state variables}\\
The state variable at time $t$, $S_t$, includes the following.\\
$R_t$: The level of energy in storage satisfying $R_t\in [0, R^{\max}]$, where $R^{\max}>0$ represents the storage capacity.\\
$\{f^E_{t,t'}\}_{t' \ge t}$: The forecast of energy from wind at time $t'$ made at time $t$, where the current energy $E_t=f^E_{t,t}$.\\
$P^g_{[t]}$:  The forward curve of spot prices of electricity from the grid with the notation of $[t]=\{t'\}_{t' \ge t}$.\\
$P^m_{[t]}$: The market price of electricity.\\
$D_{[t]}$: The load curve.\\
Hence the state of the system can be represented by the vector $S_t = (R_t, f^E_{t,t'}, P^m_{[t]}, P^g_{[t]}, D_{[t]}) \ \ \forall t' \ge t$.
\vgap

{\bf The decision variables}\\
At time $t$, several decision variables should be made to satisfy the load and replenishing the storage device for the future.\\
$x_t^{wd}$: The available energy from the wind used to satisfy the load.\\
$x_t^{rd}$: The allocated energy from the storage used to satisfy the load.\\
$x_t^{gd}$: The purchased energy from the grid used to satisfy the load.\\
$x_t^{wr}$: The available energy from the wind transferred to storage.\\
$x_t^{gr}$: The purchased energy from the grid used to store.\\
$x_t^{rg}$: The stored energy to be sold to the grid.\\
Hence, the manager's decision variables at time $t$ are defined as the vector $x_t$ given by
\[
x_t= (x^{wd}_t, x^{rd}_t, x^{gd}, x^{wr}_t, x^{gr}_t, x^{rg}_t)^\top \geq 0,
\]
which should satisfy the following constraints:
\begin{eqnarray}
x^{wd}_t  + \beta^dx^{rd}_t + x^{gd}_t & \leq & D_t, \nn \\
x^{rd}_t + x^{rg}_t &\leq& R_t, \nn\\
x^{wr}_t + x^{wd}_t &\leq& E_t,\nn\\
\beta^c(x^{wr}_t + x^{gr}_t) - x^{rd}_t - x^{rg}_t &\leq& R^{\max} - R_t, \nn\\
x^{wr}_t + x^{gr}_t &\leq&  \gamma^c, \nn\\
x^{rd}_t + x^{rg}_t &\leq& \gamma^d,\label{eq:7}
\end{eqnarray}
where $\beta^c, \beta^d \in (0, 1)$ are the charge and discharge efficiencies, $\gamma^c$ and $\gamma^d$ are the maximum amount of energy that can be charged or discharged from the storage device. Figure~\ref{model1} summarizes the model.
\begin{figure}[h]
    \centering
    \includegraphics[width=3in,height=1.25in]{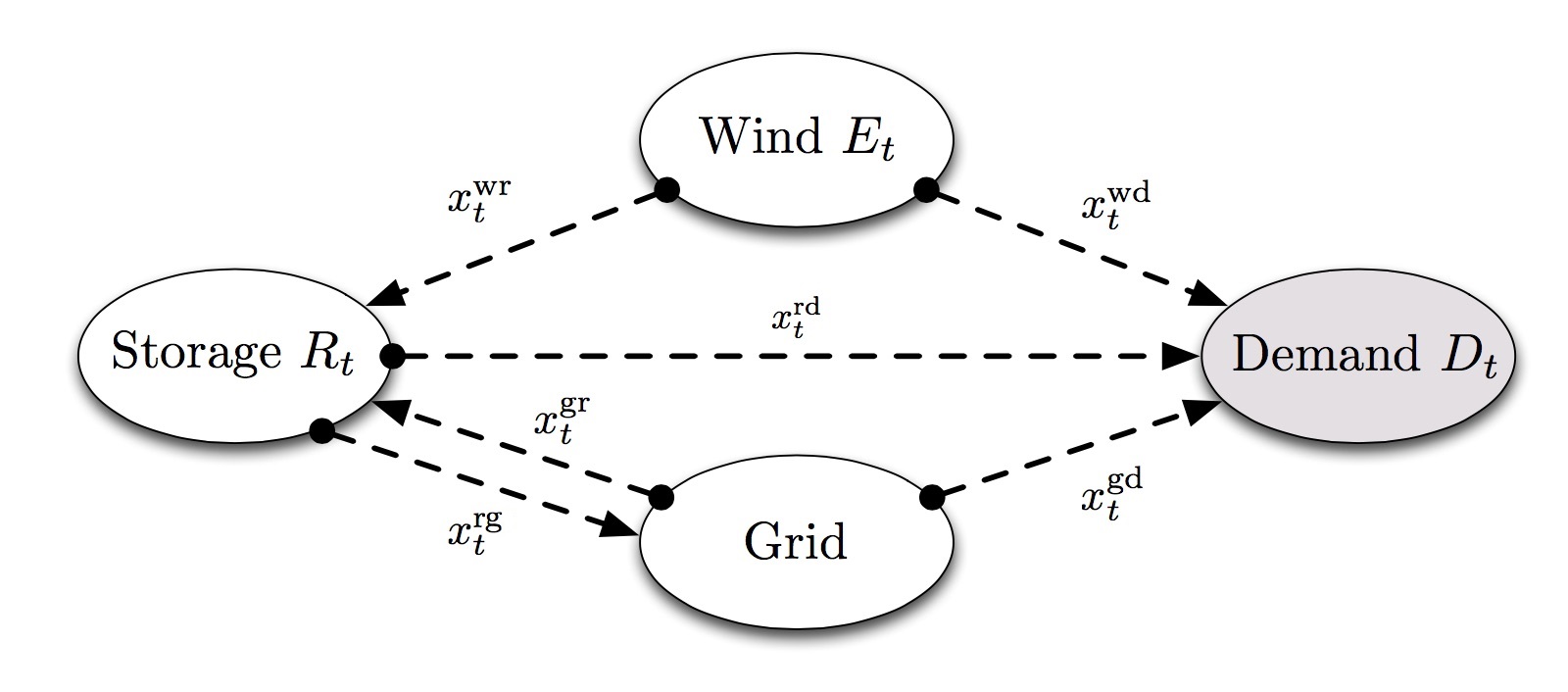}
    \caption{Energy storage model}
    \label{model1}
\end{figure}

{\bf The exogenous information}\\
This describes the information that first becomes known at time $t$. For our energy storage model, we assume that the spot price of electricity from the grid, the market price of electricity, and the load are deterministic. However, for sake of general modeling, we include them in the exogenous information. It also includes the forecasts (or the change in forecasts) of the wind. Hence, we have $W_t=(P^m_{[t]}, P^g_{[t]}, D_{[t]},f^E_{t,t'})$. It should be pointed out that the exogenous information for the next period may depend on the current decision and/or state of the system.

{\bf The transition function}\\
The transition function, $S^M(\cdot)$ also explicitly describes the relationship between the state of the model at time $t$ and $t+1$ such that $S_{t+1} = S^M(S_t, x_t, W_{t+1})$. More specifically, the relationship of storage levels between periods is defined as:
	\begin{equation}\label{eqn:trans}
		R_{t+1} = R_{t} - x^{rd}_t + \beta^c (x^{wr}_t + x^{gr}_t) - x^{rg}_t.
	\end{equation}
The forecast for the wind is also updated according to \eqref{wind_forecast}. The spot price of electricity from the grid, the market price of electricity, and the load are assumed to be fixed over the horizon and do not change once they are given at $t=1$.

{\bf The objective function}\\
To evaluate the effectiveness of a policy or sequence of decisions, we need an objective function representing the expected sum of the costs $C_t(S_t,x_t)$ in each time period $t$ over a finite horizon. Denoting the penalty of not satisfying the demand by $C^P$, for a given state $S_t$ and decision $x_t$, the cost realized at $t$ is given by
\begin{align}\label{contr_func}
	C_t(S_t, x_t) = &C^P D_t - (C^P+P^m_t) (x_t^{wd} + \beta^d x_t^{rd} + x_t^{gd}) - P^g_t (\beta^d x^{rg}_t - x^{gr}_t - x^{gd}_t).
\end{align}
Therefore, we seek to find the policy that solves
\begin{equation} \label{objective1}
	\min_{\pi \in \Pi} \: \: \mathbb{E} \bigg[ \sum^T_{t=0} C_t(S_t, X^{\pi}_t(S_t)) \: \bigg| \: S_0  \bigg],
\end{equation}
where the initial state $S_0$ is assumed to be known. If the contribution function, transition function and constraints are linear, a deterministic lookahead policy can be constructed as a linear program if point forecasts of exogenous information are provided. Equation \eqref{objective1}, along with the transition function and the exogenous information process, is called the {\it base model} which can be used to model virtually {\it any} sequential stochastic decision making problem, with possibly minor twists in the objective function for specific classes of the problem such as risk measures.

\section{Designing Policies}\label{Policies}
In this section, we first review the existing solution strategies to solve the base model and then present our new approach, namely, the parametric cost function approximation.
\subsection{The four classes of policies}
There are two general strategies for designing policies to solve the base model. The first is to use policy search, where we have to tune the parameters of a policy so that it works well over time.  The second is to build a policy that makes the best decision now that minimizes costs now and into the future (we call these lookahead policies).

The policy search class can be divided into two classes: policy function approximations (PFAs), where the policy is an analytical function that maps states to actions (such as a linear model or a neural network); and cost function approximations (CFAs) which consist of an optimization problem that has been parameterized so that it produces good solutions over time.

The lookahead class can also be divided into two classes. The class of value function approximation (VFA), is the familiar approach based on Bellman's equation where we might compute (more often we approximate) the value of being in a downstream state produced by a decision now. The class of direct lookahead approximation (DLA), is based on direct lookaheads where we optimize over some planning horizon.  The challenge with DLAs is how to handle uncertainty as we optimize over the horizon.  Most practical tools such as Google maps use a deterministic approximation.

Building uncertainty explicitly into the lookahead model is challenging.  The ultimate stochastic lookahead would require solving 
\begin{align*}
X^*_t(S_t) = &\argmin_{x_t \in {\cal X}_t} \left(C_t(S_t,x_t) + \E \Bigg\{ \min_{\pi\in\Pi}\E \Bigg\{\sum_{t'=t+1}^T C_{t'}(S_{t'},X^\pi_{t'}(S_{t'})) \Big| S_{t+1}\Bigg\} \Big| S_t,x_t \Bigg\}\right). 
\end{align*}
In special cases, the lookahead portion of the above equation can be computed exactly using Bellman's equation:
\begin{align*}
V_t(S_t)=\min_{x_t} \Big(C_t(S_t,x_t)+ \E \Big\{
V_{t+1}(S_{t+1}|S_t,x_t)\Big\}\Big),
\end{align*}
where $V_{t+1}(\cdot)$ denotes the value of the downstream impact of a decision $x_t$ made in state $S_t$. More often, we have to replace this value function with an approximation, but this only works when we can exploit structure such as convexity, linearity or monotonicity. Often, we have to directly approximate the lookahead by creating a lookahead model opening the door to a variety of approximation strategies, including the use of deterministic lookaheads, approximating the state variable and exogenous information process (this is where we can ignore the presence of rolling forecasts), along with the use of restricted policies. However, the best approach depends on the problem setting (see \cite{SO_Powell2} for more details).

\subsection{The Parametric Cost Function Approximation}\label{sec_CFA}
In this subsection, we propose using a hybrid policy of combining deterministic lookaheads with parameterically modified CFAs which can efficiently handle the issue of rolling forecasts. Consider a deterministic lookahead policy given by
\[
X^{\text{D-LA}}_t(S_t) = \argmin_{x_t, \tilde{x}_t} C_t(S_t, x_t)+ \sum_{t'=t+1}^{\min(t+H,T)}C_{t'}(S_{t'},\tilde x_{t,t'}),
\]
\begin{eqnarray}
\text{s.t.} \quad  \eqnok{eq:7}, \eqnok{eqn:trans} \:\: x_t, \tilde x_t \geq 0, \quad &\text{and}&\nn\\
\tilde x^{wd}_{t,t'}  + \beta^d \tilde x^{rd}_{t,t'} + \tilde x^{gd}_{t,t'} & \leq & D_{t'}, \nn \\
\tilde x^{rd}_{t,t'} + \tilde x^{rg}_{t,t'} &\leq& R_{t'}, \nn\\
\tilde x^{wr}_{t,t'} + \tilde x^{wd}_{t,t'} &\leq& f^E_{t,t'},\nn\\
\beta^c(\tilde x^{wr}_{t,t'} + \tilde x^{gr}_{t,t'}) - \tilde x^{rd}_{t,t'} - \tilde x^{rg}_{t,t'}+ R_{t'} &\leq& R^{\max}, \nn\\
\tilde x^{wr}_{t,t'} + \tilde x^{gr}_{t,t'} &\leq&  \gamma^c, \nn\\
\tilde x^{rd}_{t,t'} + \tilde x^{rg}_{t,t'} &\leq& \gamma^d,\nn \\
R_{t'} - \tilde x^{rd}_{t,t'} + \beta^c (\tilde x^{wr}_{t,t'} + \tilde x^{gr}_{t,t'}) - \tilde x^{rg}_{t,t'} &=&R_{t'+1},\label{DL}
\end{eqnarray}
where $\tilde x_t = (\tilde x_{t,t'})_{t'=t+1}^{\min(t+H,T)}$. When we solve the above model, we keep $x_t$ to compute the portion of the cost function at time $t$, and discard all $\tilde x_{t,t'}$ and repeat this process as we move forward over the problem horizon.

In the parametric CFA approach, we parameterize the lookahead model in \eqref{DL} in which the parametric terms can be added to the cost function and/or constraints. In this paper, we focus on parameterizing constraints including noisy forecasts. Hence, our hybrid policy $X^{\pi}_t(S_t|\theta)$ is defined as the solution to the linear programming model \eqref{DL} in which the wind energy constraint is updated as
\beq
\tilde x^{wr}_{t,t'} + \tilde x^{wd}_{t,t'} \leq b_{t'}(f^E_{t,t'},\theta),\label{param_const}
\eeq
where $b_{t'}$ is a real valued function and $\theta$ is the set of constraint parameters.

We then need to optimize the values of parameters $\theta$, for a given policy $\pi$, by solving
\begin{align}
	\min_{\theta} \: & \Bigg\{F^\pi(\theta):= \mathbb{E}_\omega\left[\bar{F}^\pi(\theta, \omega) \right]= \mathbb{E}\left[ \sum_{t=0}^T C_t \bigg(S_{t}(\omega),X^\pi_{t}(S_{t}(\omega)|\theta) \bigg) \bigg|S_0 \right]\Bigg\}. \label{cum_reward}
\end{align}
It should be pointed out that the more general optimization problem associated with the parametric CFA approach is to optimize over the structure of policies and their parameterization simultaneously. However, our focus in this paper is to solve problem \eqref{cum_reward} to optimize the parameters for a given structure of a policy. The tuned parameters capture the proper dynamics of forecasts, unlike an optimal solution to a stochastic lookahead that uses a fixed forecast. However, tuning is not easy since the above problem is usually nonconvex and we will discuss an approximation algorithm in Section~\ref{sec_algorithm} to solve it. We refer interested readers to the companion paper \cite{powelghad2022} in which we described the idea of the parametric CFA approach in more detail and for general decision making problems under uncertainty.

An important step in the parametric CFA approach is to consider meaningful parameterized policies in the model. This step is truly domain dependent and can be significantly different from one problem to another one. Indeed, this step is the art of modeling that draws on a statistical model or the knowledge and insights of the domain experts. For the energy storage problem in this paper, we assume that uncertainties only exists in the wind forecasts. Therefore, we propose the following:
\begin{itemize}

\item {\bf Constant parameterization ($\pi=const$)} - This parameterization uses a single scalar to modify the forecast of energy from wind for the entire horizon such that $b_{t'}$ in \eqref{param_const} is set to $b_{t'}(f^E_{t,t'},\theta) =\theta \cdot f^E_{t,t'}$.

	\item {\bf Lookup table parameterization ($\pi=lkup$)} - Overestimating or underestimating forecasts of energy from wind influences how aggressively a policy will store energy. We can modify the forecast for each period of the lookahead model with a unique parameter $\theta_{\tau}$. This parameterization is a lookup table representation because there is a different $\theta_\tau$ for each lookahead period, $\tau = 0, 1, 2, ...$ This implies that $b_{t'}(f^E_{t,t'},\theta) =\theta_{t' - t} \cdot f^E_{t,t'}$, where $t' \in [t+1,\min(t+H,T)]$ and $\tau = t' -t$. If $\theta_{\tau} < 1$ the policy will be more conservative and decrease the risk of running out of energy. Conversely, if $\theta_{\tau} > 1$ the policy will be more aggressive and less adamant about maintaining large energy reserves. This is a time-independent (or stationary) parameterization since the modification of the forecast at each time period depends on how far in the future forecasts are provided.

	\item {\bf Exponential decay parameterization ($\pi=exp$)} Instead of calculating a set of parameters for every period within the lookahead model, we can make our parameterization a function of time and a few parameters. Intuitively, we can assume the forecasts become worse when we are far in the future. Hence, it might be good to try some decaying functions of parameters to decrease the impact of errors in forecasts for the far future. To do this, we suggest using the following exponential function of two variables which also limits the search space of parameters into a two dimensional plane i.e.,  $b_{t'}(f^E_{t,t'},\theta)= f^E_{t,t'} \cdot \theta_1 \cdot e^{\theta_2 \cdot (t'-t)}$.
	
\end{itemize}
Similar parameterization schemes can be also proposed for the RHS of other constraints in the lookeahead model, if they include noisy forecasts. The combination of these parameterizations can be then used in the parametric CFA model, but tuning the higher dimensional parameter vector becomes harder.

\section{The Stochastic Search Algorithm}\label{sec_algorithm}
Our goal in this section is to solve problem \eqnok{cum_reward} under specific assumptions on $F^\pi(\theta)$. Even for a simple parameterization, this function is possibly nonconvex and nonsmooth which makes the optimization problem hard to solve. On the other hand, computing unbiased (sub)gradient estimates of the objective function w.r.t the parameters may be prohibitive or impossible. We first present a result on computing unbiased stochastic (sub)gradient of $F^\pi(\theta)$ under certain conditions. We then discuss the setting in which we cannot compute these stochastic (sub)gradients and we only have access to noisy evaluations of $F^\pi(\theta)$. We present a simulation-based optimization algorithm based on a randomized Gaussian smoothing technique and establish its finite-time rate of convergence to a stationary point of problem \eqnok{cum_reward} when $F^\pi(\theta)$ is possibly nonsmooth and nonconvex.

Stochastic approximation algorithms require computing stochastic (sub)gradients of the objective function iteratively. Due to the special structure of $F^\pi(\theta)$, its (sub)gradient can be computed recursively under certain conditions as shown in the next result.

\begin{proposition}\label{prop_grad}
	Assume $\bar{F}^\pi(\cdot, \omega)$ is convex/concave for every $\omega \in \Omega$, and $F^\pi(\cdot)$ is finite valued in the neighborhood of $\theta$. If distribution of $\omega$ is independent of $\theta$, we have
	\begin{equation*}
		\nabla_{\theta}F^\pi(\theta) = \mathbb{E}[\nabla_{\theta}\bar{F}^\pi(\theta, \omega)],
	\end{equation*}
	where
	\begin{equation}\label{eq:stoch_gradient}
		\nabla_{\theta}\bar{F}^\pi(\theta) = \bigg(\frac{\partial C_0}{\partial X^\pi_0} \cdot \frac{\partial X^\pi_0}{\partial \theta} \bigg) + \sum^T_{t=1} \bigg[ \bigg( \frac{\partial C_{t}}{\partial S_{t}} \cdot \frac{\partial S_{t}}{\partial \theta} \bigg) + \bigg( \frac{\partial C_{t}}{\partial X^\pi_{t}} \cdot \bigg( \frac{\partial X^\pi_{t}}{\partial S_{t}} \cdot \frac{\partial S_{t}}{\partial \theta} + \frac{\partial X^\pi_{t}}{\partial \theta} \bigg) \bigg) \bigg], \\
        \end{equation}
        \begin{equation*}
\text{and} \qquad  	\frac{\partial S_{t}}{\partial \theta} = \frac{\partial S_{t}}{\partial S_{t-1}} \cdot \frac{\partial S_{t-1}}{\partial \theta} + \frac{\partial S_{t}}{\partial X^\pi_{t-1}} \cdot \bigg[ \frac{\partial X^\pi_{t-1}}{\partial S_{t-1}} \cdot \frac{\partial S_{t-1}}{\partial \theta} + \frac{\partial X^\pi_{t-1}}{\partial \theta} \bigg],
        \end{equation*}
in which the $\omega$ is dropped for simplicity.
\end{proposition}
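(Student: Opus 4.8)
The statement really bundles two separate claims, and I would prove them in sequence. The first is the interchange $\nabla_\theta F^\pi(\theta) = \mathbb{E}[\nabla_\theta \bar F^\pi(\theta,\omega)]$, which is a classical fact about expectations of convex (resp. concave) integrands and uses none of the energy-model structure beyond the hypotheses already stated. The second is the explicit recursion \eqref{eq:stoch_gradient} for $\nabla_\theta \bar F^\pi(\theta,\omega)$ along a fixed sample path, which is a bookkeeping exercise with the multivariate chain rule applied to the composition of the cost functions $C_t$, the transition map $S^M$, and the policy maps $X_t^\pi(\cdot\,|\,\theta)$.

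For the interchange, the plan is to use that, for a convex function, the difference quotient $[\bar F^\pi(\theta+sd,\omega)-\bar F^\pi(\theta,\omega)]/s$ is nondecreasing in $s>0$ and, for $s$ small, is sandwiched between $-\bar F^{\pi\prime}(\theta,\omega;-d)$ and $\bar F^{\pi\prime}(\theta,\omega;d)$, which are integrable because $F^\pi$ is finite in a neighborhood of $\theta$. Dominated (or monotone) convergence then lets me pass $s\downarrow 0$ inside $\mathbb{E}_\omega$, giving $F^{\pi\prime}(\theta;d)=\mathbb{E}_\omega[\bar F^{\pi\prime}(\theta,\omega;d)]$ for every direction $d$; here the hypothesis that the law of $\omega$ does not depend on $\theta$ is what removes any term from differentiating the measure, so $F^\pi(\theta)=\mathbb{E}_\omega[\bar F^\pi(\theta,\omega)]$ differentiates termwise. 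Since $\bar F^\pi(\cdot,\omega)$ is differentiable at $\theta$ for a.e.\ $\omega$ (automatic off a negligible set of $\theta$ for convex/concave integrands, which is what the finiteness assumption is there to pin down at the point in question), we get $\bar F^{\pi\prime}(\theta,\omega;d)=\langle\nabla_\theta\bar F^\pi(\theta,\omega),d\rangle$, so $d\mapsto F^{\pi\prime}(\theta;d)$ is linear, $F^\pi$ is differentiable at $\theta$, and its gradient is $\mathbb{E}_\omega[\nabla_\theta\bar F^\pi(\theta,\omega)]$. I would cite the standard stochastic-programming reference (e.g.\ the Shapiro–Dentcheva–Ruszczyński monograph) for the precise form of this result.

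For the explicit formula I fix $\omega$ and suppress it. Writing $\bar F^\pi(\theta)=C_0(S_0,X_0^\pi(S_0|\theta))+\sum_{t=1}^T C_t(S_t,X_t^\pi(S_t|\theta))$ and using that $S_0$ is a known constant, so $\partial S_0/\partial\theta=0$, the $t=0$ term contributes only $(\partial C_0/\partial X_0^\pi)(\partial X_0^\pi/\partial\theta)$. For $t\ge 1$, $C_t$ depends on $\theta$ both through $S_t$ and through the explicit parameter inside $X_t^\pi(\cdot|\theta)$, so its total derivative is $(\partial C_t/\partial S_t)(\partial S_t/\partial\theta)+(\partial C_t/\partial X_t^\pi)\big[(\partial X_t^\pi/\partial S_t)(\partial S_t/\partial\theta)+\partial X_t^\pi/\partial\theta\big]$; summing over $t$ gives \eqref{eq:stoch_gradient}. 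The recursion for $\partial S_t/\partial\theta$ is obtained by differentiating $S_t=S^M(S_{t-1},X_{t-1}^\pi(S_{t-1}|\theta),W_t)$, with $W_t$ held fixed by $\omega$ and taken independent of $\theta$: the chain rule through $S_{t-1}$ directly and through $X_{t-1}^\pi(S_{t-1}|\theta)$ is exactly the displayed identity, with base case $\partial S_0/\partial\theta=0$, and unrolling it (or a one-line induction on $t$) shows every quantity in the formula is well defined.

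The \emph{main obstacle} is not the algebra but justifying that the policy maps $X_t^\pi(\cdot\,|\,\theta)$ are actually differentiable at the state–parameter pairs visited along the path. Because $X_t^\pi(S_t|\theta)$ is the optimal solution of the parametric linear program \eqref{DL} with the modified right-hand side \eqref{param_const}, it is piecewise linear in $(S_t,\theta)$ and can fail to be differentiable on the measure-zero set where the optimal basis is non-unique or degenerate — the same mechanism that can make $\bar F^\pi(\cdot,\omega)$ nonsmooth. The convexity/concavity and finiteness hypotheses are precisely what let me work around this: one-sided directional derivatives always exist, differentiability holds off a negligible set, the chain-rule expression is valid wherever $\nabla_\theta\bar F^\pi$ exists, and the interchange step only needs a.s.\ differentiability at the given $\theta$. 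I would also note that if the exogenous information $W_{t+1}$ genuinely depended on $x_t$ (as the general model permits), an additional term through $\partial S_t/\partial W_t$ would appear in the recursion; the stated formula implicitly assumes this coupling is absent, which is the case under the martingale forecast-evolution dynamics \eqref{wind_forecast}.
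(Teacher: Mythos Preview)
Your proposal is correct and follows essentially the same route as the paper's proof: the paper cites a classical result (Strassen, 1965) for the interchange $\nabla_\theta \mathbb{E}[\bar F^\pi] = \mathbb{E}[\nabla_\theta \bar F^\pi]$ under the convexity/concavity and finiteness hypotheses, and then applies the multivariate chain rule to $\bar F^\pi(\theta)=\sum_t C_t(S_t,X_t^\pi)$ to obtain \eqref{eq:stoch_gradient} and the recursion for $\partial S_t/\partial\theta$. Your additional discussion of differentiability of the LP solution maps and the $W_{t+1}$-dependence is sound elaboration beyond what the paper provides, but the core argument is identical.
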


\begin{proof}
If $\bar{F}^\pi(\cdot, \omega)$ is convex or concave for every $\omega \in \Omega$, and $F^\pi(\cdot)$ is finite valued in the neighborhood of $\theta$, then we have $\nabla_{\theta} \left(\mathbb{E} \left[\bar{F}^\pi(\theta,\omega)\right]\right) = \mathbb{E} \left[\nabla_{\theta} \bar{F}^\pi(\theta,\omega)\right]$ by \cite{Stra65}. Applying the chain rule, we find
	\begin{equation*}
	\begin{split}
	 \nabla_{\theta}\bar{F}^\pi(\theta) &= \frac{d}{d \theta} \bigg[ C_0(S_0, X^\pi_0) + \sum^T_{t=1} C(S_{t},X^\pi_{t}) \bigg]
                        		= \bigg(\frac{\partial C_0}{d X^\pi_0} \cdot \frac{d X^\pi_0}{\partial \theta} \bigg) + \bigg[  \sum^T_{t=1} \frac{d}{d \theta} \: C(S_{t},X^\pi_{t}) \bigg] \\
                        		&= \bigg(\frac{\partial C_0}{d X^\pi_0} \cdot \frac{d X^\pi_0}{\partial \theta} \bigg) + \sum^T_{t=1} \bigg[ \bigg( \frac{\partial C_{t}}{\partial S_{t}} \cdot \frac{\partial S_{t}}{\partial \theta} \bigg) + \bigg( \frac{\partial C_{t}}{\partial X^\pi_{t}} \cdot \frac{d X^\pi_{t}}{d \theta} \bigg) \bigg] \\
                        		&= \bigg(\frac{\partial C_0}{d X^\pi_0} \cdot \frac{d X^\pi_0}{\partial \theta} \bigg) + \sum^T_{t=1} \bigg[ \bigg( \frac{\partial C_{t}}{\partial S_{t}} \cdot \frac{\partial S_{t}}{\partial \theta} \bigg) + \bigg( \frac{\partial C_{t}}{\partial X^\pi_{t}} \cdot \bigg( \frac{\partial X^\pi_{t}}{\partial S_{t}} \cdot \frac{\partial S_{t}}{\partial \theta} + \frac{\partial X^\pi_{t}}{\partial \theta} \bigg) \bigg) \bigg], \\
                        \end{split}
                        \end{equation*}
where
\begin{equation*}
\frac{\partial S_{t}}{\partial \theta} = \frac{\partial S_{t}}{\partial S_{t-1}} \cdot \frac{\partial S_{t-1}}{\partial \theta} + \frac{\partial S_{t}}{\partial X^\pi_{t-1}} \left[ \frac{\partial X^\pi_{t-1}}{\partial S_{t-1}} \cdot \frac{\partial S_{t-1}}{\partial \theta} + \frac{\partial X^\pi_{t-1}}{\partial \theta} \right].
\end{equation*}
\end{proof}

Note that if $\bar F^\pi(\theta)$ is not differentiable, then its subgradient can be still computed using \eqnok{eq:stoch_gradient}. However, when $\bar F^\pi(\theta)$ is not convex (concave), its subgradient may not exist and the concept of generalized subgradient should be employed. If $\nabla_{\theta}\bar{F}^\pi(\theta,\omega)$ exists for every $\omega \in \Omega$, the ability to calculate its unbiased estimator allows us to use SA-type techniques such as stochastic gradient descent (SGD) to determine the optimal parameter $\theta^*$. However, this is not always the case. The function $F^\pi(\theta)$ can be generally nonsmooth and nonconvex and hence, its subgradient may not exist everywhere. Moreover, calculating \eqnok{eq:stoch_gradient} may not be easy. Therefore, we propose an alternative way to estimate gradient of $F^\pi(\theta)$.

To simplify our notation, we drop the superscript $\pi$ for the policies in definition of the objective function and it only refers to the number $\pi$ in the rest of this section. Before we proceed, we assume that the objective function is Lipschitz continuous w.r.t $\theta$ with constant $L_0>0$, for any $\omega \in \Omega$ i.e.,
\[
|\bar F(\theta_1,\omega) - \bar F(\theta_2,\omega)| \le L_0 \|\theta_1 -\theta_2\| \qquad \forall \theta_1, \theta_2,
\]
which consequently implies that $F(\theta)$ is Lipschitz continuous with constant $L _0$. This is a reasonable assumption for most of the applications as the cost (objective function) does not make sudden changes w.r.t small change of resources (policies). This property will be used to establish the convergence analysis of our proposed algorithm. Furthermore, we assume that noisy evaluations of $F(\theta)$ can be obtained through simulations and hence, we can use techniques from simulation-based optimization where even the shape of the function may not be known (see e.g., \cite{fu2015handbook} and the references therein). In the reminder of this section, we provide a zeroth-order SA algorithm and establish its finite-time convergence analysis to solve problem \eqnok{cum_reward}. 

A smooth approximation of the function $F(\theta)$ can be defined by the following convolution:
\beq \label{rand_smooth_func}
F_{\eta}(\theta) = \frac{1}{(2 \pi)^{\frac{d}{2}}} \int F(\theta+\eta v) e^{-\frac{1}{2}\|v\|^2} \,dv =\bbe_v[F(\theta+\eta v)].
\eeq
where $\eta>0$ is the smoothing parameter and $v \in \bbr^d$ is a Gaussian random vector whose mean is zero and covariance is the identity matrix. The following result in \cite{NesSpo17} provides some properties of $F_{\eta}(\cdot)$.

\begin{lemma} \label{smth_approx}
The following statements hold for any Lipschitz continuous function $F$ with constant $L_0$.
\begin{itemize}
\item [a)] The function $F_\eta$ is differentiable and its gradient is given by
\beq \label{smth_approx_grad}
\nabla F_{\eta}(\theta) = \frac{1}{(2 \pi)^{\frac{d}{2}}} \int \tfrac{F(\theta+\eta v)-F(\theta)}{\eta} v e^{-\frac{1}{2}\|v\|^2} \,dv = 
\bbe_v [\tfrac{F(\theta+\eta v)-F(\theta)}{\eta} v].
\eeq
\item [b)] The gradient of $F_\eta$ is Lipschitz continuous with constant $L_{\eta} = \frac{\sqrt{d}}{\eta}L_0$, and for any $\theta \in \bbr^d$, we have
\beqa \label{rand_smth_close}
|F_{\eta}(\theta)-F(\theta)| &\le& \eta L_0 \sqrt{d},\\\
\bbe_v [\|[F(\theta+\eta v)-F(\theta)]v\|^2] &\le& \eta^2 L_0^2(d+4)^2.\label{bnd_grad}
\eeqa
\end{itemize}

\end{lemma}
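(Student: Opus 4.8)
The plan is to reduce everything to the Gaussian Stein-type identities and standard Gaussian moment estimates, which is the route taken in \cite{NesSpo17}. First I would establish part (a). Starting from the definition $F_\eta(\theta)=\frac{1}{(2\pi)^{d/2}}\int F(\theta+\eta v)e^{-\|v\|^2/2}\,dv$, I would substitute $u=\theta+\eta v$ to rewrite $F_\eta(\theta)=\frac{1}{(2\pi\eta^2)^{d/2}}\int F(u)e^{-\|u-\theta\|^2/(2\eta^2)}\,du$. Since $F$ is Lipschitz it is locally integrable against the Gaussian kernel and grows at most linearly, so differentiation under the integral sign is justified by dominated convergence (the kernel and its $\theta$-gradient have Gaussian tails that dominate the linear growth of $F$). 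Differentiating the kernel in $\theta$ gives $\nabla_\theta e^{-\|u-\theta\|^2/(2\eta^2)} = \frac{u-\theta}{\eta^2}e^{-\|u-\theta\|^2/(2\eta^2)}$; changing variables back to $v$ yields $\nabla F_\eta(\theta)=\frac{1}{(2\pi)^{d/2}}\int F(\theta+\eta v)\frac{v}{\eta}e^{-\|v\|^2/2}\,dv = \mathbb{E}_v[\tfrac{F(\theta+\eta v)}{\eta}v]$. To get the stated form with $F(\theta)$ subtracted, I would note $\mathbb{E}_v[v]=0$, so $\mathbb{E}_v[\tfrac{F(\theta)}{\eta}v]=0$ and can be added freely, giving \eqref{smth_approx_grad}.

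For part (b), the Lipschitz constant of $\nabla F_\eta$: from \eqref{smth_approx_grad} and the change of variables, $\nabla F_\eta(\theta) = \frac{1}{\eta(2\pi)^{d/2}}\int F(u)\,\tfrac{u-\theta}{\eta^2}\,e^{-\|u-\theta\|^2/(2\eta^2)}\,du$ minus a term that vanishes; differentiating once more in $\theta$ and estimating, or more cleanly writing $\nabla F_\eta(\theta_1)-\nabla F_\eta(\theta_2)=\frac{1}{\eta}\mathbb{E}_v[(F(\theta_1+\eta v)-F(\theta_2+\eta v))v]$ (again using $\mathbb{E}_v[v]=0$ to subtract a constant), then $\|\nabla F_\eta(\theta_1)-\nabla F_\eta(\theta_2)\| \le \frac{1}{\eta}\mathbb{E}_v[\,|F(\theta_1+\eta v)-F(\theta_2+\eta v)|\,\|v\|\,] \le \frac{L_0}{\eta}\|\theta_1-\theta_2\|\,\mathbb{E}_v\|v\| \le \frac{L_0\sqrt d}{\eta}\|\theta_1-\theta_2\|$, using $\mathbb{E}_v\|v\|\le (\mathbb{E}_v\|v\|^2)^{1/2}=\sqrt d$. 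For the first inequality in \eqref{rand_smth_close}, $|F_\eta(\theta)-F(\theta)| = |\mathbb{E}_v[F(\theta+\eta v)-F(\theta)]| \le L_0\eta\,\mathbb{E}_v\|v\| \le L_0\eta\sqrt d$. For \eqref{bnd_grad}, $\mathbb{E}_v[\|(F(\theta+\eta v)-F(\theta))v\|^2] \le L_0^2\eta^2\,\mathbb{E}_v[\|v\|^4]$, and I would invoke the standard bound $\mathbb{E}_v[\|v\|^4]\le (d+4)^2$ for a standard Gaussian in $\bbr^d$ (which follows from $\mathbb{E}\|v\|^4 = d(d+2)\le(d+4)^2$, or the slightly looser moment bounds in \cite{NesSpo17}).

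The main obstacle, such as it is, is purely technical: rigorously justifying the interchange of differentiation and integration in part (a) for a merely Lipschitz (not differentiable) $F$. This needs the observation that the Gaussian smoothing kernel is $C^\infty$ with all derivatives having Gaussian tails, so the integrand $F(u)\,\partial_\theta[\text{kernel}]$ is dominated locally uniformly in $\theta$ by an integrable function (linear growth of $F$ times Gaussian decay), after which the classical differentiation-under-the-integral theorem applies; the non-smoothness of $F$ never enters because all $\theta$-derivatives fall on the kernel. Everything else is bookkeeping with the first and fourth moments of a standard Gaussian vector. Since this is precisely Lemma stated and proved in \cite{NesSpo17}, I would cite that reference for the moment bound $\mathbb{E}\|v\|^4\le(d+4)^2$ and present the rest as above.
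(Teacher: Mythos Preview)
Your proposal is correct and follows exactly the standard Nesterov--Spokoiny argument. Note that the paper itself does not give a proof of this lemma at all: it simply states the result and attributes it to \cite{NesSpo17}, so there is no ``paper's own proof'' to compare against beyond that citation, and your sketch is precisely the content of that reference.
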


\vgap

We also need the following result about using different smoothing parameters. 
\begin{lemma}
Assume that the function $F$ is Lipschitz continuous with constant $L_0$ and $\eta_1,\eta_2>0$. Then, for any $\theta \in \bbr^d$, we have
\beq\label{lips_diff}
\|\nabla F_{\eta_2}(\theta) - \nabla F_{\eta_1}(\theta)\| \le \frac{2 L_0 d|\eta_2 -\eta_1|}{\eta_2}.
\eeq
\end{lemma}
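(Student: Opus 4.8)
The plan is to work directly from the expectation representation of the gradient established in Lemma~\ref{smth_approx}(a), namely
\[
\nabla F_{\eta}(\theta) = \bbe_v\!\left[\tfrac{F(\theta+\eta v)-F(\theta)}{\eta}\,v\right],
\]
keeping the centering term $-F(\theta)$ in place, since this is exactly what makes each finite-difference quotient individually controllable. Writing $\nabla F_{\eta_2}(\theta)$ and $\nabla F_{\eta_1}(\theta)$ over the same standard Gaussian vector $v$, their difference becomes a single expectation
\[
\nabla F_{\eta_2}(\theta) - \nabla F_{\eta_1}(\theta) = \bbe_v\!\left[\left(\tfrac{F(\theta+\eta_2 v)-F(\theta)}{\eta_2} - \tfrac{F(\theta+\eta_1 v)-F(\theta)}{\eta_1}\right)v\right].
\]

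Next I would carry out an elementary algebraic decomposition of the scalar coefficient. Setting $g_i := F(\theta+\eta_i v) - F(\theta)$ for $i=1,2$, one writes $\tfrac{\eta_1 g_2 - \eta_2 g_1}{\eta_1\eta_2}$ and adds and subtracts $\eta_1 g_1$ in the numerator to obtain $\tfrac{g_2}{\eta_2} - \tfrac{g_1}{\eta_1} = \tfrac{g_2 - g_1}{\eta_2} + \tfrac{(\eta_1-\eta_2)g_1}{\eta_1\eta_2}$. The Lipschitz property of $F$ then bounds both pieces: $|g_2 - g_1| = |F(\theta+\eta_2 v) - F(\theta+\eta_1 v)| \le L_0|\eta_2-\eta_1|\,\|v\|$ and $|g_1| \le L_0 \eta_1\|v\|$, so each piece is at most $L_0|\eta_2-\eta_1|\,\|v\|/\eta_2$ and hence $\bigl|\tfrac{g_2}{\eta_2} - \tfrac{g_1}{\eta_1}\bigr| \le 2L_0|\eta_2-\eta_1|\,\|v\|/\eta_2$.

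Finally I would bound the norm of the expectation by the expectation of the norm (Jensen's inequality, as $\|\cdot\|$ is convex), giving
\[
\|\nabla F_{\eta_2}(\theta) - \nabla F_{\eta_1}(\theta)\| \le \bbe_v\!\left[\Bigl|\tfrac{g_2}{\eta_2} - \tfrac{g_1}{\eta_1}\Bigr|\,\|v\|\right] \le \frac{2L_0|\eta_2-\eta_1|}{\eta_2}\,\bbe_v[\|v\|^2],
\]
and conclude with $\bbe_v[\|v\|^2] = d$ for a standard $d$-dimensional Gaussian. There is no serious obstacle in this argument; the only two points that need a moment of care are (i) using the \emph{centered} form of the gradient rather than the uncentered $\bbe_v[\tfrac{F(\theta+\eta v)}{\eta}v]$ — the latter would leave an uncontrolled $F(\theta+\eta_1 v)$ term after the decomposition — and (ii) justifying the passage of the norm through the expectation. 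Both are routine.
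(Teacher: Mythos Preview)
Your proof is correct and follows essentially the same route as the paper: both use the centered gradient representation \eqref{smth_approx_grad}, perform the identical add-and-subtract decomposition of the numerator $\eta_1 g_2 - \eta_2 g_1$ into $\eta_1(g_2-g_1) + (\eta_1-\eta_2)g_1$, bound each piece via Lipschitz continuity of $F$, pass the norm inside the expectation, and finish with $\bbe_v[\|v\|^2]=d$. The only cosmetic difference is that the paper writes the combined numerator explicitly as $\eta_1 F(\theta+\eta_2 v)-\eta_2 F(\theta+\eta_1 v)+(\eta_2-\eta_1)F(\theta)$ before splitting, whereas you introduce the shorthand $g_i$ first.
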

\begin{proof}
Noting \eqref{smth_approx_grad}, we have
\begin{align*}
\|\nabla F_{\eta_2}(\theta) - \nabla F_{\eta_1}(\theta)\| &= \left\|\bbe_v \left[\left(\tfrac{\eta_1 F(\theta+\eta_2 v)-\eta_2F(\theta+\eta_1 v)+(\eta_2-\eta_1) F(\theta)}{\eta_2 \eta_1}\right) v \right]\right\|\\
&\le \bbe_v \left[\left(\tfrac{|F(\theta+\eta_2 v)-F(\theta+\eta_1 v)|}{\eta_2}+ \tfrac{|\eta_2 - \eta_1||F(\theta+\eta_1 v)- F(\theta)|}{\eta_2 \eta_1}\right) \left\|v \right\|\right]\\
&\le \frac{2 L_0 d|\eta_2 -\eta_1|}{\eta_2},
\end{align*}
where the last inequality follows from the Lipschitz continuity of $F$ and the fact that $\bbe_v[\|v\|^2]=d$.
\end{proof}

\vgap

Our method is formally described as Algorithm~\ref{CFA_Grad_F}. The gradient estimate at its $k$-th iteration, as shown in \eqref{grad_zorth}, is obtained by only sampling a random Gaussian vector $v^k$ and computing the noisy objective values at the current point $\theta^k$ and its perturbation $\theta^k+\eta_k v^k$.
While \eqref{grad_zorth} gives us an unbiased estimator for $\nabla F_{\eta_k}(\theta^k)$ as
\beq\label{grad_unbiased}
\bbe_{\omega,v}[G_{\eta_k}(\theta^k,\omega^k)] = \nabla F_{\eta_k}(\theta^k)
\eeq
due to \eqref{smth_approx_grad}, it provides a biased estimator for $\nabla F(\theta^k)$. However, by properly choosing the algorithm parameters, we can control the bias terms and ensure convergence of the algorithm. In particular, if $\alpha_k=1 \ \ \forall k$, Algorithm~\ref{CFA_Grad_F} reduces to those presented in \cite{GhaLan12,NesSpo17} with the established rate of convergence, which are based on the classical simultaneous perturbation stochastic approximation (SPSA) algorithm~\cite{spall1992multivariate}.

\begin{algorithm}[h]
   \caption{The stochastic averaging numerical gradient method (SANG)}
   \label{CFA_Grad_F}
   \begin{algorithmic}[1]
    	\STATE Input: $\theta^0 \in \bbr^d$, $\bar G^0 =0$, an iteration limit $N$, positive sequences $\{\eta_k\}_{k \ge 1}$, $\{\gamma_k\}_{k \ge 1}$, $\{\alpha_k\}_{k \ge 1} \in (0,1)$, and a probability mass function (PMF) $P_R(\cdot)$ supported on $\{1,\dots,N\}$.
	
\STATE Generate a random number $R$ according to $P_R(\cdot)$.

\vgap

{\bf For $k=1,\ldots, R$:}

{\addtolength{\leftskip}{0.2in}

{\hspace{1in}} \STATE Update policy parameters as
							\begin{align}
								\theta_y^k &= \theta^{k-1}-\beta_k \bar G^{k-1},\label{update_thetay}\\
                                \theta^k &= (1-\alpha_k)\theta^{k-1} + \alpha_k \theta_y^k.\label{update_theta}
							\end{align}

		\STATE Generate a trajectory $\omega^{k}$ where $S^k_{t+1}(\omega^k) = S^M(S^k_t(\omega^k), X_t^\pi(S^k_t(\omega^k)|\theta^k),W_{t+1}(\omega^k))$, and a random Gaussian vector $v_k$ to compute the gradient estimator as
 \beq \label{grad_zorth}
 G_{\eta_k}(\theta^k,\omega^k) = \tfrac{\bar{F}(\theta^k+\eta_k v^k,\omega^k)-\bar{F}(\theta^k,\omega^k)}{\eta_k}v^k,
 \eeq							
and set
\beq\label{grad_ave}
   \bar G^k = (1-\alpha_k) \bar G^{k-1}+\alpha_k G_{\eta_k}(\theta^k,\omega^k).
\eeq

}

{\bf End For}

    \end{algorithmic}
    \end{algorithm}

On the other hand, if the unbiased stochastic gradient of $f$ is available and used instead of $G_{\eta_k}(\theta^k,\omega^k)$, Algorithm~\ref{CFA_Grad_F} reduces to a variant of the algorithm proposed in \cite{ghadimi2020single} for nested problems. Thus, one may easily establish the convergence analysis of Algorithm~\ref{CFA_Grad_F} assuming it is applied to minimize $F_\eta (\theta)$. However, this does not specify the choice of smoothing parameter $\eta$. In addition, the smoothing parameter can be different at each iteration and hence, the convergence analysis of \cite{ghadimi2020single} is not directly applicable as the smoothing function is changed every iteration.

In the next result, we provide the main convergence analysis of our proposed algorithm.

\begin{theorem}\label{theom_convg}
Let $\{\theta_k\}$ be generated by Algorithm~\ref{CFA_Grad_F}, $\bar F (\theta)$ be Lipschitz continuous with constant $L_0$, and $F(\theta)$ be bounded below by $F^*$. If parameters are chosen such that
\beq\label{choice_param}
\beta_0 =1, \quad \eta_{k+1} \le \eta_k, \quad \beta_{k+1} \le \beta_k, \quad \beta_k \le \frac{c_2 \eta_k}{L_0\sqrt{d}}, \quad \sum_{i=k}^N \alpha_i A_i \le c_1 A_k \qquad \forall k \ge 1
\eeq
for some positive constants $c_1, c_2 >0$ and 
\beq \label{def_Ak}
A_k =  \prod_{i=1}^k (1-\alpha_i) \qquad \forall k \ge 1.
\eeq
We have
\begin{align}
\sum_{k=2}^N  \alpha_k \beta_k \bbe[\|\bar G^{k-1}\|^2] &\le
F^* - F(\theta^0)+2 \eta_1 L_0 \sqrt{d}+ \frac{L_0^2 (d+4)^2 (4+c_2)}{2}\sum_{k=1}^N \beta_k \alpha_k^2, \label{stationry_Geta}\\
\sum_{k=1}^N  \alpha_k \beta_k \bbe[\|\nabla F_{\eta_k} (\theta^k)\|^2] &\le
2(1+2 c_1 c_2^2) (F^* - F(\theta^0)+2 \eta_1 L_0 \sqrt{d})\nn\\
&+  L_0^2 (d+4)^2 \sum_{k=1}^N \beta_k \left[\frac{8c_1 (\eta_k -\eta_{k-1})^2}{\alpha_k \eta_k^2} + [2(1+c_1)+(4+c_2)(1+2 c_1 c_2^2)] \alpha_k^2\right],\label{stationry_Feta}
\end{align}
where the expectation is taken w.r.t the random vector $\omega$, and Gaussian random vector $v$.
\end{theorem}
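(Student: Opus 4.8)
The natural approach is a Lyapunov (potential-function) argument run on the sequence $\{F_{\eta_k}(\theta^k)\}$ --- the true objective seen through the $k$-th randomized smoothing --- paired with a separate recursion that controls how far the averaged gradient $\bar G^{k}$ drifts from $\nabla F_{\eta_k}(\theta^k)$. The ingredients already available are: (i) $G_{\eta_k}(\theta^k,\omega^k)$ is a conditionally unbiased estimator of $\nabla F_{\eta_k}(\theta^k)$ with $\mathbb{E}[\|G_{\eta_k}(\theta^k,\omega^k)\|^2\mid\theta^k]\le L_0^2(d+4)^2$, from \eqref{grad_unbiased} and \eqref{bnd_grad}; (ii) $F_{\eta_k}$ is differentiable with $\tfrac{\sqrt d L_0}{\eta_k}$-Lipschitz gradient and $|F_{\eta_k}(\theta)-F(\theta)|\le\eta_k L_0\sqrt d$, from Lemma~\ref{smth_approx}; and (iii) a change of smoothing parameter perturbs the smoothed gradient by at most $\tfrac{2L_0 d(\eta_{k-1}-\eta_k)}{\eta_k}$, by \eqref{lips_diff}. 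I would first record the identities $\theta^k-\theta^{k-1}=-\alpha_k\beta_k\bar G^{k-1}$ (from \eqref{update_thetay}--\eqref{update_theta}) and $\theta^1=\theta^0$ (since $\bar G^0=0$), with $A_k$ as in \eqref{def_Ak}.

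For \eqref{stationry_Geta}, apply the descent lemma for the smooth surrogate $F_{\eta_k}$ at $\theta^{k-1}$ with increment $-\alpha_k\beta_k\bar G^{k-1}$, obtaining $F_{\eta_k}(\theta^k)\le F_{\eta_k}(\theta^{k-1})-\alpha_k\beta_k\langle\nabla F_{\eta_k}(\theta^{k-1}),\bar G^{k-1}\rangle+\tfrac{\sqrt d L_0}{2\eta_k}\alpha_k^2\beta_k^2\|\bar G^{k-1}\|^2$. Writing $\nabla F_{\eta_k}(\theta^{k-1})=\bar G^{k-1}-e^{k-1}-\big(\nabla F_{\eta_{k-1}}(\theta^{k-1})-\nabla F_{\eta_k}(\theta^{k-1})\big)$ with the tracking error $e^{k-1}:=\bar G^{k-1}-\nabla F_{\eta_{k-1}}(\theta^{k-1})$, bounding the smoothing-drift term by \eqref{lips_diff}, applying Young's inequality to the two resulting inner products, and using $\beta_k\le c_2\eta_k/(L_0\sqrt d)$ to absorb $\tfrac{\sqrt d L_0}{2\eta_k}\alpha_k^2\beta_k^2$ into an $O(c_2\alpha_k\beta_k)$ multiple of $\|\bar G^{k-1}\|^2$, this becomes $F_{\eta_k}(\theta^k)\le F_{\eta_k}(\theta^{k-1})-c\,\alpha_k\beta_k\|\bar G^{k-1}\|^2+O\!\big(\alpha_k\beta_k\|e^{k-1}\|^2\big)+O\!\big(\alpha_k\beta_k(\eta_{k-1}-\eta_k)^2/\eta_k^2\big)$. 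Then I would replace $F_{\eta_k}(\theta^{k-1})$ by $F_{\eta_{k-1}}(\theta^{k-1})$ at a cost of $L_0\sqrt d(\eta_{k-1}-\eta_k)$ (from (ii)), so that the two sides telescope over $k=2,\dots,N$; the endpoints are closed by $F_{\eta_N}(\theta^N)\ge F^*-\eta_1 L_0\sqrt d$, $F_{\eta_1}(\theta^1)=F_{\eta_1}(\theta^0)\le F(\theta^0)+\eta_1 L_0\sqrt d$, and $\sum_k(\eta_{k-1}-\eta_k)\le\eta_1$. Taking expectations gives a bound on $\sum_k\alpha_k\beta_k\mathbb{E}\|\bar G^{k-1}\|^2$ in terms of the initial gap $F(\theta^0)-F^*$, $\eta_1 L_0\sqrt d$, and the residual sums $\sum\alpha_k\beta_k\mathbb{E}\|e^{k-1}\|^2$ and $\sum\alpha_k\beta_k(\eta_{k-1}-\eta_k)^2/\eta_k^2$, which still must be absorbed.

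The heart of the proof is the recursion for the tracking error. From \eqref{grad_ave} one gets $e^k=(1-\alpha_k)\big(e^{k-1}+\delta_k\big)+\alpha_k\big(G_{\eta_k}(\theta^k,\omega^k)-\nabla F_{\eta_k}(\theta^k)\big)$, where $\delta_k=\nabla F_{\eta_{k-1}}(\theta^{k-1})-\nabla F_{\eta_k}(\theta^k)$ splits into a smoothing-change part bounded by $\tfrac{2L_0 d(\eta_{k-1}-\eta_k)}{\eta_k}$ via \eqref{lips_diff} and an iterate-change part $\nabla F_{\eta_k}(\theta^{k-1})-\nabla F_{\eta_k}(\theta^k)$ bounded by $\tfrac{\sqrt d L_0}{\eta_k}\alpha_k\beta_k\|\bar G^{k-1}\|$ via (ii). Conditioning on $\theta^k$, the cross term vanishes by \eqref{grad_unbiased} and the martingale increment has second moment $\le L_0^2(d+4)^2$ by \eqref{bnd_grad}; using $(1-\alpha_k)^2(1+\alpha_k)\le 1-\alpha_k$ and $\tfrac{\sqrt d L_0}{\eta_k}\beta_k\le c_2$ yields $\mathbb{E}\|e^k\|^2\le(1-\alpha_k)\mathbb{E}\|e^{k-1}\|^2+O\!\big(\tfrac{(\eta_{k-1}-\eta_k)^2}{\alpha_k\eta_k^2}\big)+O\!\big(c_2^2\alpha_k\mathbb{E}\|\bar G^{k-1}\|^2\big)+\alpha_k^2L_0^2(d+4)^2$. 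Unrolling with $A_k$ gives $\mathbb{E}\|e^k\|^2\le A_k\|e^0\|^2+A_k\sum_{i\le k}s_i/A_i$ ($\|e^0\|\le L_0$); multiplying by $\alpha_k\beta_k$, summing, exchanging the order of summation, and invoking $\sum_{i=k}^N\alpha_iA_i\le c_1A_k$ together with $\{\beta_k\}$ nonincreasing bounds $\sum_k\alpha_k\beta_k\mathbb{E}\|e^k\|^2$ by $c_1\sum_k\beta_k s_k$ plus a constant. The feedback term $O\!\big(c_2^2\sum_k\alpha_k\beta_k\mathbb{E}\|\bar G^{k-1}\|^2\big)$ produced here is then moved back into the estimate of the previous paragraph; this is exactly why $c_2$ must be small and why the amplification factor $(1+2c_1c_2^2)$ appears. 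Rearranging gives \eqref{stationry_Geta}.

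For \eqref{stationry_Feta} I would use $\|\nabla F_{\eta_k}(\theta^k)\|\le\|\bar G^k\|+\|e^k\|$ (or the analogous split anchored at step $k-1$ plus a drift term), square, and combine the bound on $\sum\alpha_k\beta_k\mathbb{E}\|\bar G^{k-1}\|^2$ with the bound on $\sum\alpha_k\beta_k\mathbb{E}\|e^k\|^2$ just obtained; the explicit coefficients $2(1+2c_1c_2^2)$, $8c_1$, $2(1+c_1)$ and $(4+c_2)(1+2c_1c_2^2)$ come out of tracking the Young's-inequality weights and the repeated use of $\tfrac{\sqrt d L_0}{\eta_k}\beta_k\le c_2$ and $\sum_{i\ge k}\alpha_iA_i\le c_1A_k$. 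The main obstacle, in my view, is the two-way coupling between the two recursions: the descent inequality leaks a $+\|e^{k-1}\|^2$ term into the progress on $F_{\eta_k}(\theta^k)$, while the error recursion leaks a $+\|\bar G^{k-1}\|^2$ term back through $\delta_k$ (the iterate moves by $\alpha_k\beta_k\bar G^{k-1}$). Closing this loop requires the product of the two leakage gains --- essentially $c_1c_2^2$ --- to stay bounded, which is precisely what $\beta_k\le c_2\eta_k/(L_0\sqrt d)$ and $\sum_{i=k}^N\alpha_iA_i\le c_1A_k$ provide; simultaneously one must ensure the time-varying smoothing contributes only the single boundary term $\eta_1L_0\sqrt d$ (via telescoping of $|F_{\eta_k}-F_{\eta_{k-1}}|$) rather than a divergent $\sum_k\eta_k$, and that the $(\eta_{k-1}-\eta_k)^2/\eta_k^2$ residuals land only where \eqref{stationry_Feta} displays them.
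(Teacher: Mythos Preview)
Your plan couples the descent inequality to the tracking-error recursion and then tries to close the loop; this is not how the paper obtains \eqref{stationry_Geta}, and as written it does not quite work. The paper's key device is a \emph{combined} potential $F_{\eta_k}(\theta^k)+\tfrac{\beta_k}{2}\|\bar G^k\|^2$: writing the descent lemma with the gradient evaluated at the \emph{new} point $\theta^k$ and decomposing $\nabla F_{\eta_k}(\theta^k)=G_{\eta_k}(\theta^k,\omega^k)+\delta_k$, then adding $\tfrac{\beta_k}{2}$ times the identity $\|\bar G^k\|^2=\alpha_k^2\|G_{\eta_k}-\bar G^{k-1}\|^2+\|\bar G^{k-1}\|^2+2\alpha_k\langle G_{\eta_k}-\bar G^{k-1},\bar G^{k-1}\rangle$, makes the $\alpha_k\beta_k\langle G_{\eta_k},\bar G^{k-1}\rangle$ terms cancel exactly. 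What survives is $-\alpha_k\beta_k\|\bar G^{k-1}\|^2$, a zero-mean term $\langle\delta_k,\bar G^{k-1}\rangle$, and residuals that are bounded uniformly by $L_0^2(d+4)^2$ (using $\bbe\|G_{\eta_k}\|^2\le L_0^2(d+4)^2$ and $\bbe\|\bar G^{k-1}\|^2\le L_0^2(d+4)^2$). Telescoping then gives \eqref{stationry_Geta} \emph{without any reference to the tracking error} $e^k$ and, in particular, with no $c_1$ in it.

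Your route instead injects $\|e^{k-1}\|^2$ into the descent estimate and $\|\bar G^{k-1}\|^2$ into the error recursion, so after substituting you end up with $(1-O(c_1c_2^2))\sum_k\alpha_k\beta_k\bbe\|\bar G^{k-1}\|^2\le\cdots$; this needs an unstated contraction condition $c_1c_2^2<\text{const}$ to rearrange, and even then the resulting right-hand side carries a $c_1$-dependent amplification that \eqref{stationry_Geta} does not have (the factor $(1+2c_1c_2^2)$ appears only in \eqref{stationry_Feta}). The paper avoids this circularity by proving \eqref{stationry_Geta} first, independently, and then feeding it into the tracking-error analysis to get \eqref{stationry_Feta}: one bounds $\|\nabla F_{\eta_k}(\theta^k)\|^2\le 2\|\nabla F_{\eta_k}(\theta^k)-\bar G^k\|^2+2\|\bar G^k\|^2$, unrolls the recursion for $\|\nabla F_{\eta_k}(\theta^k)-\bar G^k\|^2$ via $A_k$, swaps summation order using $\sum_{i\ge k}\alpha_iA_i\le c_1A_k$, and only at that stage does the $c_1c_2^2\sum_k\alpha_k\beta_k\bbe\|\bar G^{k-1}\|^2$ term appear --- but now it can be bounded by the already-established \eqref{stationry_Geta}, which is why the coefficient is the additive $(1+2c_1c_2^2)$ rather than a reciprocal.
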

\begin{proof}
First note that $F (\theta)$ is Lipschitz continuous with constant $L$ due to the same assumption on $\bar F (\theta)$. Hence, the gradient of $F_\eta (\theta)$ is Lipschitz continuous with constant $L_\eta$ due to Lemma~\ref{smth_approx}.b which together with the fact that
\beq\label{diff_theta}
\theta^k - \theta^{k-1} = \alpha_k (\theta_y^k - \theta^{k-1}) = -\alpha_k \beta_k \bar G^{k-1}
\eeq
due to \eqref{update_thetay} and \eqref{update_theta}, imply that
\beqa
F_{\eta_k} (\theta^{k-1}) &\ge& F_{\eta_k} (\theta^k) + \langle \nabla F_{\eta_k} (\theta^k), \theta^{k-1} - \theta^k \rangle -\frac{L_{\eta_k}}{2}\|\theta^k - \theta^{k-1}\|^2 \nn\\
&=& F_{\eta_k} (\theta^k) + \alpha_k \beta_k \langle \nabla F_{\eta_k} (\theta^k), \bar G^{k-1} \rangle -\frac{L_{\eta_k} \alpha_k^2 \beta_k^2}{2}\|\bar G^{k-1}\|^2\nn\\
&=& F_{\eta_k} (\theta^k) +\alpha_k \beta_k \langle \delta_k +  G_{\eta_k}(\theta^k,\omega^k), \bar G^{k-1} \rangle 
-\frac{L_{\eta_k} \alpha_k^2 \beta_k^2}{2}\|\bar G^{k-1}\|^2,\label{recursion0}
\eeqa
where $\delta_k := \nabla F_{\eta_k} (\theta^k)-G_{\eta_k}(\theta^k,\omega^k)$. Moreover, by \eqref{grad_ave}, we have
\begin{align*}
\|\bar G^k\|^2 &= \|\bar G^k - \bar G^{k-1}\|^2 + \|\bar G^{k-1}\|^2 +2 \langle \bar G^k - \bar G^{k-1}, \bar G^{k-1}\rangle \\
&= \alpha_k^2\|G_{\eta_k}(\theta^k,\omega^k) - \bar G^{k-1}\|^2 + \|\bar G^{k-1}\|^2 +2 \alpha_k \langle G_{\eta_k}(\theta^k,\omega^k) - \bar G^{k-1}, \bar G^{k-1}\rangle.
\end{align*}
Multiplying the above relation by $\beta_k/2$, combining it by \eqref{recursion0}, noting the fact that $\beta_k \le \beta_{k-1}$ re-arranging the terms, we obtain
\begin{align}
F_{\eta_k} (\theta^k) + \tfrac{\beta_k}{2} \|\bar G^k\|^2 &\le 
F_{\eta_k} (\theta^{k-1}) + \tfrac{\beta_{k-1}}{2} \|\bar G^{k-1}\|^2 -\alpha_k \beta_k \|\bar G^{k-1}\|^2
- \alpha_k \beta_k \langle \delta_k , \bar G^{k-1} \rangle \nn\\
&+ \tfrac{\beta_k \alpha_k^2}{2} \Big[\|G_{\eta_k}(\theta^k,\omega^k) - \bar G^{k-1}\|^2
+L_{\eta_k} \beta_k\|\bar G^{k-1}\|^2 \Big].\label{rec_f}
\end{align}
Dividing both sides of \eqref{grad_ave} by $A_k$ (defined in \eqref{def_Ak}), summing them up, and noting that $\bar G^0 =0$, we obtain
\beq \label{rec1_g}
\bar G^k = A_k \sum_{i=1}^k \frac{\alpha_i}{A_i} G_{\eta_i}(\theta^i,\omega^i),
\eeq
which together with the convexity of $\|\cdot\|^2$ and the fact that
\[
\sum_{i=1}^k \frac{\alpha_i}{A_i}=\frac{\alpha_1}{A_1}+\sum_{i=2}^k \frac{\alpha_i}{A_i} = \frac{\alpha_1}{A_1}+\sum_{i=2}^k \left(\frac{1}{A_i}-\frac{1}{A_{i-1}}\right) = \frac{1}{A_k}-1
\]
due to \eqref{def_Ak}, imply
\[
\|\bar G^k\|^2 \le (1-A_k) A_k \sum_{i=1}^k \frac{\alpha_i}{A_i} \|G_{\eta_i}(\theta^i,\omega^i)\|^2.
\]
Moreover, by Lipschitz continuity of $\bar{F}(\cdot,\omega)$, \eqref{bnd_grad}, and \eqref{grad_unbiased}, we have
\begin{align*}
&\bbe[\delta_k] = 0,\\
&\bbe[\|G_{\eta_k}(\theta^k,\omega^k)\|^2] \le L_0^2(d+4)^2,\\
&\bbe[\|\bar G^k\|^2] \le L_0^2(d+4)^2 A_k \sum_{i=1}^k \frac{\alpha_i}{A_i} \le L_0^2(d+4)^2,\\
&\bbe[\|G_{\eta_k}(\theta^k,\omega^k) - \bar G^{k-1}\|^2] \le 4 L_0^2(d+4)^2,
\end{align*}
which together with \eqref{rec_f} and the assumption that $L_{\eta_k} \beta_k \le c_2$, imply that 
\[
2\alpha_k \beta_k \|\bar G^{k-1}\|^2 + \beta_k \|\bar G^k\|^2  \le \beta_{k-1} \|\bar G^{k-1}\|^2 + 
2 [F_{\eta_k} (\theta^{k}) - F_{\eta_k} (\theta^{k-1})] +  L_0^2(d+4)^2 (4+c_2)\beta_k \alpha_k^2.
\]
Summing up the above inequalities, and rearranging the terms, and noting the fact that $\bar G^0=0$, we obtain
\[
2\sum_{k=2}^N  \alpha_k \beta_k \bbe[\|\bar G^{k-1}\|^2] \le
2 \Delta_N+ L_0^2(d+4)^2 (4+c_2) \sum_{k=1}^N \beta_k \alpha_k^2,
\]
where $\Delta_N = F_{\eta_N}(\theta^N)-F_{\eta_1}(\theta^0)+\sum_{k=1}^{N-1} [F_{\eta_k}(\theta^k)-F_{\eta_{k+1}}(\theta^k)]$. Noting \eqnok{rand_smth_close}, the fact that $F (\theta) \le F^*$ for any $\theta \in \bbr^d$, \eqnok{rand_smooth_func}, Lipschitz continuity of $F$, and the assumption that $\eta_{k+1} \le \eta_k$ we have
\beqa
F_{\eta_N}(\theta^N)-F_{\eta_1}(\theta^0) &\le& F^* - F(\theta^0)+ (\eta_1+\eta_N) L_0 \sqrt{d}, \nn \\
F_{\eta_k}(\theta^k)-F_{\eta_{k+1}}(\theta^k) &=&  \bbe_v[F(\theta^k+\eta_k v)-F(\theta^k+\eta_{k+1} v)] \nn \\
 &\le& L_0 (\eta_k - \eta_{k+1}) \bbe_v[\|v\|] \le L_0 \sqrt{d} (\eta_k - \eta_{k+1}),\nn
\eeqa
which clearly implies that $\Delta_N \le F^* - F(\theta^0)+ 2\eta_1 L_0 \sqrt{d}$. We then immediately conclude \eqref{stationry_Geta}.

Now, observe that we need to bound $\|\nabla F_{\eta_k} (\theta^k)- \bar G^k \|^2$ as
\begin{align}
\|\nabla F_{\eta_k} (\theta^k)\|^2 &\le 2\Big[\|\nabla F_{\eta_k} (\theta^k)- \bar G^k \|^2+ \|\bar G^k \|^2 \Big]\nn \\
&\le 2 \Big[\|\nabla F_{\eta_k} (\theta^k)- \bar G^k \|^2+ (1-\alpha_k) \|\bar G^{k-1} \|^2 + \alpha_k \|G_{\eta_k}(\theta^k,\omega^k)\|^2 \Big],\label{grad_upper}
\end{align}
where the second inequality comes from the convexity of $\|\cdot\|^2$ and \eqref{grad_ave}. Noting \eqref{grad_ave} and the definition of $\delta_k$, we have
\beq \label{rec0}
\nabla F_{\eta_k} (\theta^k)- \bar G^k = (1-\alpha_k) (\nabla F_{\eta_k} (\theta^k)-\bar G^{k-1}) +\alpha_k \delta_k,
\eeq
which implies that
\begin{align}
\|\nabla F_{\eta_k} (\theta^k)- \bar G^k\|^2 &= \|(1-\alpha_k) (\nabla F_{\eta_k} (\theta^k)-\bar G^{k-1})\|^2 +\alpha_k^2 \|\delta_k\|^2 \nn \\
&+ 2 \alpha_k (1-\alpha_k) \langle \nabla F_{\eta_k} (\theta^k)-\bar G^{k-1} , \delta_k \rangle.\label{sqr1}
\end{align}
Moreover, by the convexity of $\|\cdot\|^2$ and we have
\begin{align}\label{grad_recu0}
\|(1-\alpha_k) (\nabla F_{\eta_k} (\theta^k)-\bar G^{k-1})\|^2 &=
\|(1-\alpha_k) (\nabla F_{\eta_{k-1}} (\theta^{k-1})-\bar G^{k-1})+ \alpha_k e_k\|^2\nn \\
&\le (1-\alpha_k) \|\nabla F_{\eta_{k-1}} (\theta^{k-1})-\bar G^{k-1}\|^2+ \alpha_k \|e_k\|^2,
\end{align}
where
\beq\label{def_ek}
e_k = \frac{(1 - \alpha_k)}{\alpha_k} \Big[\nabla F_{\eta_k} (\theta^k) - \nabla F_{\eta_{k-1}} (\theta^{k-1})\Big].
\eeq
Hence, by \eqref{sqr1} and \eqref{grad_recu0}, 
\begin{align}
\|\nabla F_{\eta_k} (\theta^k)- \bar G^k\|^2 &\le (1-\alpha_k) \| \nabla F_{\eta_{k-1}} (\theta^{k-1})-\bar G^{k-1}\|^2 + \alpha_k \|e_k\|^2+\alpha_k^2 \|\delta_k\|^2 \nn \\
&+ 2 \alpha_k (1-\alpha_k) \langle \nabla F_{\eta_k} (\theta^k)-\bar G^{k-1} , \delta_k \rangle.\label{sqr2}
\end{align}
Thus, similar to \eqref{rec1_g}, we can obtain
\begin{align} 
\|\nabla F_{\eta_k} (\theta^k)- \bar G^k\|^2 &\le A_k \left( \|\nabla F_{\eta_0} (\theta^0)\|^2 + \sum_{i=1}^k \left[ \frac{\alpha_i}{A_i} \|e_i\|^2 + \frac{\alpha_i^2}{A_i} \|\delta_i\|^2 \right] \right)\nn\\
&+ 2A_k \sum_{i=1}^k \frac{\alpha_i (1-\alpha_i)}{A_i} \langle \nabla F_{\eta_i} (\theta^i)-\bar G^{i-1} , \delta_i \rangle.\label{rec1}
\end{align}
In the rest of the proof, only for the sake of simplicity, we assume that $\eta_0$ and $\theta_0$ are chosen such that $\nabla F_{\eta_0} (\theta^0)=0$. Now, by \eqref{diff_theta}, Lipschitz continuity of $\bar{F}(\cdot,\omega)$ and $\nabla F_\eta$, \eqref{bnd_grad}, and \eqref{lips_diff}, we have
\begin{align*}
&\|\nabla F_{\eta_k} (\theta^k)-\nabla F_{\eta_{k-1}} (\theta^{k-1})\|^2 \nn\\
&\le 2 \Big(\|\nabla F_{\eta_k} (\theta^k)-\nabla F_{\eta_k} (\theta^{k-1})\|^2 + \|\nabla F_{\eta_k} (\theta^{k-1})-\nabla F_{\eta_{k-1}} (\theta^{k-1})\|^2 \Big)\\
&\le 2 \left[(L_{\eta_k} \alpha_k \beta_k)^2 \|\bar G^{k-1}\|^2 + \frac{4 L_0^2 d^2 (\eta_k -\eta_{k-1})^2}{\eta_k^2} \right],\\
&\bbe\left[\|\delta_k\|^2\right] 
\le 2 \Big[\|\nabla F_{\eta_k} (\theta^k)\|^2 + \bbe[\|G_{\eta_k}(\theta^{k-1},\omega^k)\|^2 \Big]
\le 2L_0^2(d+4)^2,\\
&\bbe[\langle \nabla F_{\eta_k} (\theta^k)-\bar G^{k-1} , \delta_k \rangle] =0.
\end{align*}
Therefore, taking expectation from both sides of \eqref{rec1} and noting \eqref{def_ek}, we obtain
\begin{align}
\sum_{k=1}^N \beta_k \alpha_k  \bbe[\|\nabla F_{\eta_k} (\theta^k)- \bar G^k\|^2] &\le  \sum_{k=1}^N \beta_k \alpha_k A_k  \left(\sum_{i=1}^k \frac{\alpha_i}{A_i} \bbe[\|e_i\|^2] + \sum_{i=1}^k \frac{\alpha_i^2}{A_i} \bbe[\|\delta_i\|^2] \right) \nn\\
&= \sum_{k=1}^N \frac{\alpha_k}{A_k}\left(\sum_{i=k}^N \beta_i \alpha_i A_i\right) \Big( \bbe[\|e_k\|^2] + \alpha_k \bbe[\|\delta_k\|^2\Big) \nn \\
&\le \sum_{k=1}^N \frac{\alpha_k \beta_k }{A_k}\left(\sum_{i=k}^N \alpha_i A_i\right) \Big( \bbe[\|e_k\|^2] + \alpha_k \bbe[\|\delta_k\|^2\Big) \nn\\
&\le c_1 \sum_{k=1}^N \alpha_k \beta_k \Big( \bbe[\|e_k\|^2] + \alpha_k \bbe[\|\delta_k\|^2\Big) \nn\\
&\le 2 c_1 \sum_{k=1}^N \left( c_2^2 \alpha_k\beta_k \bbe[\|\bar G^{k-1}\|^2] +
 \frac{4 L_0^2 d^2 \beta_k (\eta_k -\eta_{k-1})^2}{\alpha_k \eta_k^2}\right) \nn\\
 &+ 2 c_1 L_0^2(d+4)^2 \sum_{k=1}^N \beta_k \alpha_k^2,\label{f_g_differ}
\end{align}
where the second to the fourth inequalities follow from the assumptions in \eqref{choice_param}. Combining the above relation with \eqref{stationry_Geta} and \eqref{grad_upper}, we obtain \eqref{stationry_Feta}.
\end{proof}

In the next result, we specialize the rate of convergence of Algorithm~\ref{CFA_Grad_F} by specifying its parameters.
\begin{corollary}\label{cor_convg}
Let the assumptions in the statement of Theorem~\ref{theom_convg} hold and an iteration limit $N \ge 1$ is given. If the parameters are set to 
\beq \label{eta_alpha0}
\alpha_k = \frac{1}{\sqrt{\delta (d+4) N}}, \qquad 
\eta_k = \frac{\delta}{L_0 \sqrt{d}}, \qquad \beta_k = \frac{\delta}{L_0^2 d} \qquad k =1,\ldots, N,
\eeq
for some $\delta> 0$. Then we have
\begin{align}
 \bbe[\|\bar G^R\|^2] &\le
\frac{L_0^2 (d+4)^\frac32 (F^* - F(\theta^0)+5)}{\sqrt{\delta N}}, \label{stationry_Geta2}\\
 \bbe[\|\nabla F_{\eta_R} (\theta^R)\|^2] &\le
\frac{6 L_0^2 (d+4)^\frac32 (F^* - F(\theta^0)+6)}{\sqrt{\delta N}},\label{stationry_Feta2}
\end{align}
where expectation is also taken w.r.t the random integer number $R$ whose  probability distribution is supported on $\{1,\ldots,N\}$ and is given by
\beq\label{def_PR}
P_R(R=k) = \frac{\alpha_k \beta_k}{\sum_{k=1}^N \alpha_k \beta_k} \qquad k \in\{1,\ldots,N\}.
\eeq
\end{corollary}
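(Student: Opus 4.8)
The plan is to treat this corollary purely as a specialization of Theorem~\ref{theom_convg}, carried out in three steps: (1) verify that the choices in \eqref{eta_alpha0} satisfy the hypotheses \eqref{choice_param} of the theorem for an explicit pair of constants $(c_1,c_2)$; (2) evaluate in closed form the few parameter-dependent sums appearing on the right-hand sides of \eqref{stationry_Geta}--\eqref{stationry_Feta}, exploiting that $\alpha_k,\beta_k,\eta_k$ are now all constant in $k$; and (3) divide both inequalities by $\sum_{k=1}^N\alpha_k\beta_k$ and invoke the definition \eqref{def_PR} of the random index $R$, which is designed precisely so that $\sum_k\alpha_k\beta_k\,\mathbb{E}[\,\cdot\,]=\bigl(\sum_k\alpha_k\beta_k\bigr)\,\mathbb{E}_R[\,\cdot\,]$ (for \eqref{stationry_Geta} this produces a bound on $\mathbb{E}[\|\bar G^{R-1}\|^2]$, written $\mathbb{E}[\|\bar G^R\|^2]$ in the statement, since the $k=1$ term is $\alpha_1\beta_1\|\bar G^0\|^2=0$).

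For step (1), write $\alpha,\beta,\eta$ for the common values. The requirements $\eta_{k+1}\le\eta_k$ and $\beta_{k+1}\le\beta_k$ hold with equality, $\alpha\in(0,1)$ whenever $\delta(d+4)N>1$, and, since \eqref{eta_alpha0} gives $\beta=\delta/(L_0^2 d)=\eta/(L_0\sqrt d)$, we get $L_{\eta_k}\beta_k=\tfrac{\sqrt d}{\eta}L_0\cdot\beta=1$, so $\beta_k\le c_2\eta_k/(L_0\sqrt d)$ holds with $c_2=1$. With $A_k=(1-\alpha)^k$, the geometric sum yields $\sum_{i=k}^N\alpha_i A_i=(1-\alpha)^k\bigl(1-(1-\alpha)^{N-k+1}\bigr)\le(1-\alpha)^k=A_k$, so the last condition in \eqref{choice_param} holds with $c_1=1$. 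Finally $\beta_0$ may be taken equal to $1$ (it enters only through $\beta_0\|\bar G^0\|^2=0$), and $\beta_1\le\beta_0$ is just the harmless normalization $\delta\le L_0^2 d$. Hence Theorem~\ref{theom_convg} applies with $c_1=c_2=1$.

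For steps (2)--(3), the relevant sums are $\sum_{k=1}^N\beta_k\alpha_k^2=N\cdot\tfrac{\delta}{L_0^2 d}\cdot\tfrac1{\delta(d+4)N}=\tfrac1{L_0^2 d(d+4)}$ and $\sum_{k=1}^N\alpha_k\beta_k=\tfrac{\sqrt{\delta N}}{L_0^2 d\sqrt{d+4}}$; every difference $\eta_k-\eta_{k-1}$ vanishes since the smoothing parameter is held constant (for $k=1$, after setting $\eta_0:=\eta_1$; the associated $\nabla F_{\eta_0}(\theta^0)$ term is dropped under the same convention used in the proof of Theorem~\ref{theom_convg}); and $2\eta_1 L_0\sqrt d=2\delta$. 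Substituting these with $c_1=c_2=1$ turns the right-hand side of \eqref{stationry_Geta} into $\bigl(F^*-F(\theta^0)\bigr)+2\delta+\tfrac{5(d+4)}{2d}$; dividing by $\sum_k\alpha_k\beta_k$, using $d\le d+4$, and bounding $2\delta+\tfrac{5(d+4)}{2d}$ by the numerical constant $5$ (here a mild restriction such as $\delta\le 1$ is convenient; for larger $\delta$ one still gets the same $O\bigl((d+4)^{3/2}/\sqrt{\delta N}\bigr)$ rate, with $\delta$-dependent constants) gives \eqref{stationry_Geta2}. Feeding the same quantities into \eqref{stationry_Feta}, where $2(1+2c_1c_2^2)=6$ and $2(1+c_1)+(4+c_2)(1+2c_1c_2^2)=19$, and dividing and simplifying in exactly the same way yields \eqref{stationry_Feta2}.

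I do not expect a genuine obstacle here: the whole argument is arithmetic built on Theorem~\ref{theom_convg}. The two places that need a little attention are confirming that $c_1=1$ is admissible — i.e., that $\sum_{i=k}^N\alpha_i A_i\le A_k$ holds for the constant schedule — and keeping track of the powers of $d$ so that the final rate is reported as $(d+4)^{3/2}/\sqrt{\delta N}$ rather than the slightly sharper $d\sqrt{d+4}/\sqrt{\delta N}$; the step between the two is merely $d\le d+4$.
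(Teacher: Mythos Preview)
Your proposal is correct and follows essentially the same approach as the paper: verify that the constant schedules satisfy \eqref{choice_param} with $c_1=c_2=1$, compute $\sum_k\alpha_k\beta_k$ and $\sum_k\beta_k\alpha_k^2$ in closed form, and substitute into \eqref{stationry_Geta}--\eqref{stationry_Feta}. The only cosmetic difference is that the paper checks $\sum_{i=k}^N\alpha_i A_i\le A_k$ via the telescoping identity $\alpha_i A_i=(A_{i-1}-A_i)(1-\alpha_i)=A_i-A_{i+1}$ (which does not use constancy of $\alpha$), whereas you sum the geometric series directly; both yield the same conclusion, and your remarks on the index shift in \eqref{stationry_Geta} and on the numerical constants are more explicit than what the paper records.
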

\begin{proof}
First, note that by choices of parameters in \eqref{eta_alpha0}, we have
\begin{align*}
&\sum_{k=1}^N \beta_k \alpha_k  = \left(\frac{\delta}{L_0^2 d} \right) \left(\frac{N}{\sqrt{\delta (d+4)N}}\right) = \frac{1}{L_0^2 }\sqrt{\frac{\delta N}{d^2 (d+4)}}, \\
&\sum_{k=1}^N \beta_k \alpha_k^2  = \left(\frac{\delta}{L_0^2 d} \right) \left(\frac{N}{\delta (d+4)N}\right) \le \frac{1}{L_0^2 d (d+4)},\\
&\sum_{i=k}^N \alpha_i A_i = \sum_{i=k}^N \left(\frac{A_{i-1} -A_i}{A_{i-1}}\right) A_i
\sum_{i=k}^N \left(A_{i-1} -A_i\right) (1-\alpha_i) = \sum_{i=k}^N \left(A_i -A_{i+1}\right) \\
&= A_k -A_{N+1} \le A_k \quad \forall k \ge 1,
\end{align*}
which implies that the assumptions in \eqref{choice_param} hold with $c_1=c_2 =1$ and together with \eqref{stationry_Geta} and \eqref{stationry_Feta} imply
\eqref{stationry_Geta2} and \eqref{stationry_Feta2}.
\end{proof}

\vgap

We now add a few remarks about the abode results. First, note that \eqref{stationry_Feta} implies that the total number of function evaluations to obtain an $\epsilon$-stationary point of minimizing the smooth function ($\bar \theta \in \bbr^d$ such that $\bbe[\|\nabla F_{\eta} (\bar \theta)\|^2] \le \epsilon$)
is bounded by
\beq\label{complxty_bnd1}
{\cal O} \left(\frac{L_0^4 d^3}{\delta \epsilon^2} \right),
\eeq
which is slightly better than the one obtained in \cite{NesSpo17} (for the weighted average of $\bbe[\|\nabla F_{\eta}(\theta^k)\|^2]$ without introducing the random index $R$) in terms of dependence on $L_0$. It should be noted that due to the choice of $\eta$ in \eqref{eta_alpha0}, the parameter $\delta$ controls the error between the original objective function and its smooth approximations i.e., $|f(\theta) -f_{\frac{\delta}{L_0 \sqrt{d}}}(\theta)| \le \delta$ for any given $\theta$. Hence, as $\delta$ goes to zero, the output of Algorithm~\ref{CFA_Grad_F} will be closer to a stationary point of problem \eqref{cum_reward}.

Second, we can adaptively choose $\beta_k$ and $\eta_k$ such that they gradually converge to zero. For example, if both $\beta_k$ and $\eta_k$ are in the order of $1/k^\gamma$ for some $\gamma \in (0,1)$, the algorithm is still convergent, albeit with a worse complexity than \eqref{complxty_bnd1}. In this case, we do not need to use a very small smoothing parameter at the beginning iteration of the algorithm. 

Third, the weighted average of stochastic gradients in \eqref{grad_ave} is used to reduce the variance associated with gradient estimates. To further reduce this variance, one can use a mini-batch of samples to compute \eqnok{grad_zorth}. In particular, given a batch size of $m_k$ and generating samples $\omega^k=\{\omega^{k,i}\}_{i=1}^{m_k}$, the stochastic gradient used in \eqref{grad_ave} is computed as
\beq\label{grad_zorth_ave}
G_{\eta_k}(\theta^k,\omega^k) = \frac{1}{m_k} \sum_{i=1}^{m_k} G_{\eta_k}(\theta^k,\omega^{k,i}).
\eeq
This additional averaging will further improve the practical performance of the algorithm as shown in the next section. Also, it is worth noting that $\bbe[\|\nabla F_{\eta_R} (\theta^R)- \bar G^R \|^2]$ converges to zero with the same rate presented in Corollary~\ref{cor_convg}. Hence, $\|\bar G^k\|$ can be used as an online certificate to assess the quality of generated solutions without taking extra batch of samples. This is another advantage of using the weighted average of stochastic gradients to update the policy at each iteration of the SANG method.

Finally, when the smoothing parameter is fixed, $\beta_k$ can be set to any number while changing the rate of convergence by a constant factor. Hence, practically successful stepsize policies can be tried. For example, one can use the widely used adaptive stepsize formula in the machine learning community for stochastic optimization, namely, the Root Mean Square Propagation (RMSProp) \cite{TieHin12} given by
\beq\label{Rms}
\beta_k =  \frac{b}{\sqrt{\bar g_{k}}}, \qquad \bar g_{k}=(1-\gamma_k) \bar g_{k-1}+\gamma_k\|G^k\|^2,
\eeq
where $b$ is a tunable parameter in the learning rate $\gamma_k \in (0,1)$, and $ G^{k}$ is the gradient estimate at the $k$-th iteration. This stepsize policy performs well in our experiments as shown in the next section.

\section{Numerical Experiments}\label{sec_experiments}
In this section, we test the performance of the CFA approach in designing different parameterization policies on the energy storage problem as discussed in Section~\ref{sec_energy_storage}. To do so, we compare the objective function for a given policy $\pi$ parameterized by $\theta$ with that of the deterministic benchmark policy (defined in \eqref{DL}) which does not change the forecasts. We then report the improvement as
\beq\label{ration_improv}
\Delta F^\pi(\theta) = \frac{F^\pi (\theta) - F^{D-LA}}{|F^{D-LA}|},
\eeq
where $F^\pi$ here is the average of noisy objective values given by \eqref{cum_reward} with the contribution function \eqref{contr_func} in terms of thousand dollars. In all of our experiments, we generate $1000$ sample paths and report the averaged function evaluations to approximate the objective function i.e., $F^\pi(\theta) \approx \sum_{i=1}^{1000} \bar{F}^\pi(\theta, \omega^i)/1000$.

To better show the practical performance of the parametric CFA approach, we assume that we are only given forecasts for the renewable energy and all other information are known in the energy storage problem. In particular, we assume that the forecasts are generated according to \eqnok{wind_forecast}.

In our first set of experiments, we evaluate the performance of the constant forecast parameterization ($\pi =const$). We generate five data sets of forecasts with different levels of noise ($\rho_E \in \{0,0.1,0.2,0.3,0.4\}$) and perform a grid search for the values of $\theta$. We then compare the averaged objective values with the benchmark policy ($\theta=1$). Since the range of these values is high, we show the normalized objective improvement in Figure~\ref{fig_constant} for the purpose of better presentation. Under perfect forecast ($\rho_E=0$) the benchmark policy works the best as expected. However, under the presence of noisy forecasts, the optimal policy changes from $\theta=1$ and the constant parameterization improves the objective function.
We also examine the performance of the lookup table parameterization policy with $H = 23$ under perfect forecasts ($\rho_E=0$). In particular, we first set all values of $\theta$ to $1$ and then do a one-dimensional search over
each coordinate of $\theta$. As it can be seen from Figure~\ref{fig_constant2}, under perfect forecasts, the optimal
value for each coordinate of $\theta$ equals $1$ while the others are set to $1$.

\begin{figure}[h]
    \centering
    \includegraphics[width=4.5in,height=2.5in]{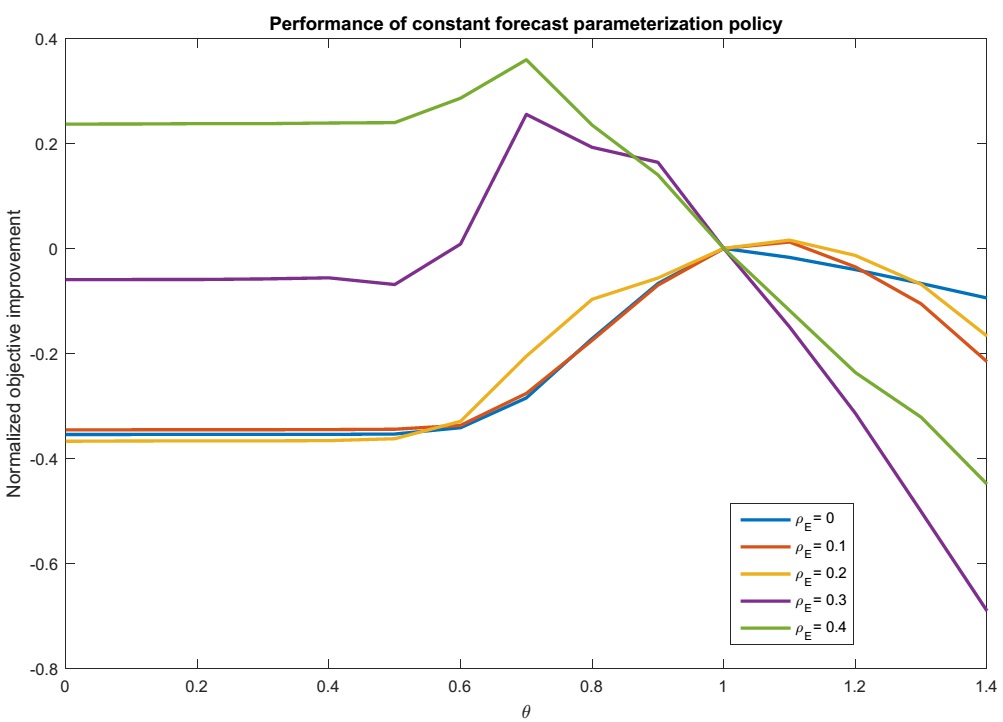}
    \caption{Performance of the constant forecast parameterization}
    \label{fig_constant}
\end{figure}

\begin{figure}[h]
        \centering
         \includegraphics[width=4.5in,height=2.5in]{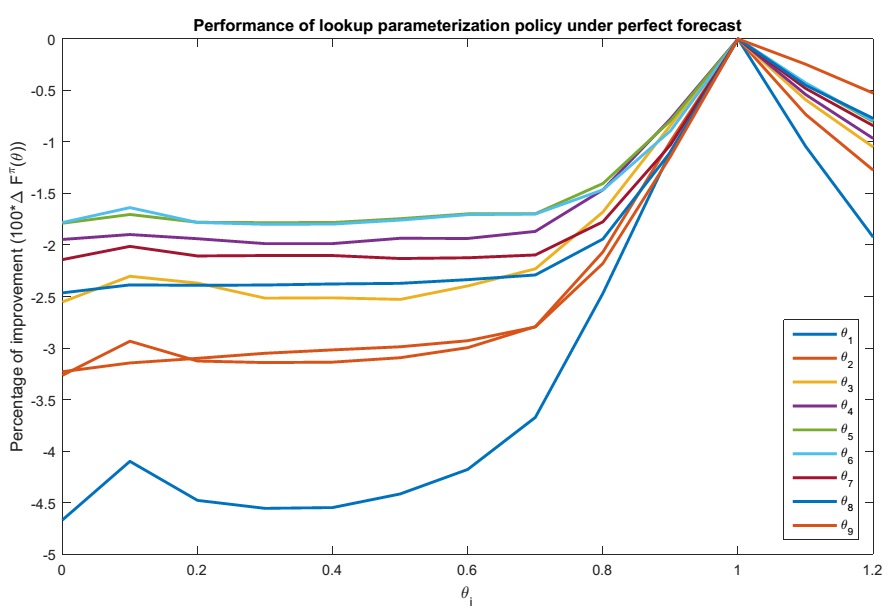}
\caption{Averaged performance of lookup parameterization policy under perfect forecasts. Each curve represents performance of the lookup policy over changing one $\theta_i$ ($i=1,\ldots,9$) while $\theta_j=1 \ \ \forall j \neq i$. The rest of $\theta_i, \ \ i>9$ has similar behaviour and are removed to increase the readability of the graph.}
         \label{fig_constant2}
\end{figure}

\begin{figure}[htp!]
    \centering
    \includegraphics[width=4.5in,height=2.5in]{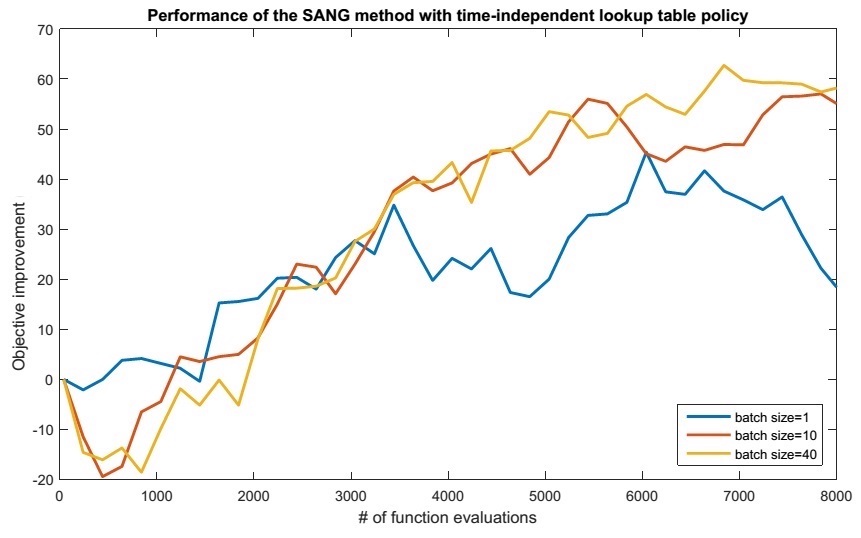}
    \includegraphics[width=4.5in,height=2.5in]{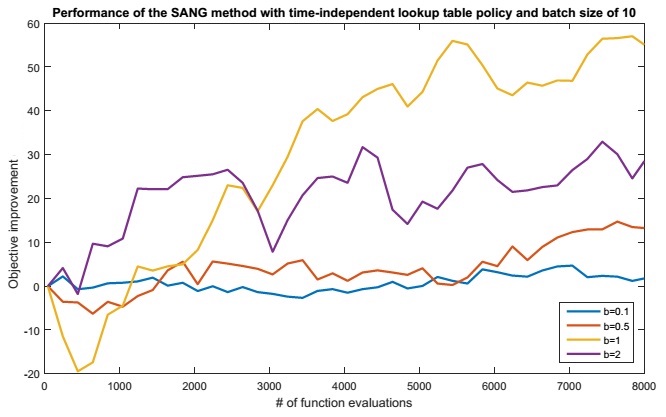}
    \includegraphics[width=4.5in,height=2.5in]{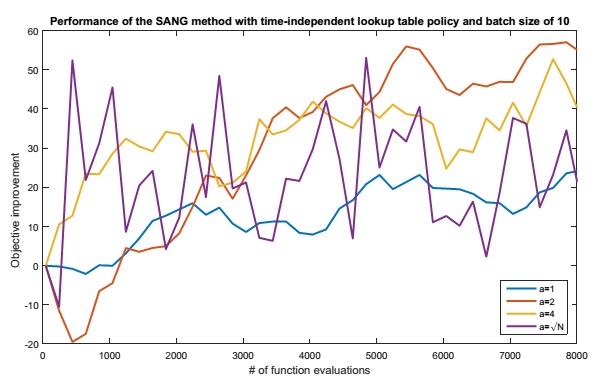}
    \caption{Performance of the SANG method with time-independent lookup table parameterization with $\rho_E=0.2$. Top: different batch sizes with $a=2$ and $b=1$; middle: different learning rates with $a=2$ and batch size of $10$; down: different tuning parameters with $b=1$ and batch size of $10$.}
    \label{fig_lookup1}
\end{figure}

In our next set of experiments, we consider the lookup table parameterization ($\pi=lkup$) which has a larger search space ($\theta \in \bbr^{23}$). Therefore, we implement Algorithm~\ref{CFA_Grad_F} with the stepsize formula in \eqref{Rms} and wight coefficients in \eqref{eta_alpha0} which multiplied by a constant factor i.e.,
\[
\alpha_k = \frac{a}{\sqrt{\delta (d+4) N}},
\]
for some $a>0$. We first run the algorithm with $a=2$, $\delta=b=1$, and different batch sizes used at each iteration to compute \eqnok{grad_zorth} using \eqnok{grad_zorth_ave}.

The top graph in Figure~\ref{fig_lookup1} shows the objective improvement over the benchmark policy vs. the number of function evaluations when using the time-independent lookup table representation of the parameters. For each choice of batch size, the algorithm runs for a different number of iterations such that the total number of function evaluations is the same  for the three runs. They all achieve at least $\$40$k improvement which is much better than the $\$5$k improvement of the constant parameterization for the same level of noise in the forecasts. Moreover, the runs with the batch sizes of $10$ and $40$ have similar and more robust convergence while the former runs for more number of iterations and hence is more likely to find better policies.

Choosing the batch size of $10$, we then run the algorithm with different learning rates and tuning parameters as shown in the middle and down graphs in Figure~\ref{fig_lookup1}. As it can be seen, the batch size of $10$ with $a=2$ and $b=1$ has the best performance among other choices.

\section{Conclusion}\label{sec_conclusion}
We provide a hybrid policy of deterministic lookahead and cost function approximations (CFA), namely, the parametric CFA to find the best policy to for energy storage problems under the presence of rolling forecasts. While this approach can handle complex stochastic models associated with the rolling forecasts, it comes at the cost of tuning parameters (policies). The objective function in the parametric CFA model is likely to be nonconvex and its unbiased gradient estimates are not easy to calculate. Hence, we present a new stochastic numerical derivative-based algorithm which only uses noisy function evaluations (obtained via simulations) to provide biased gradient estimates. By properly taking a weighted average of these biased gradient estimates, we reduce the variance associated with them which enables us to control accumulated the bias errors. Furthermore, we establish finite-time rate of convergence of this algorithm under different settings and show that it can practically find policies that performs better than the deterministic benchmark policy in optimizing an energy storage system under the presence of rolling forecasts.

\section{Acknowledgements}
The first author was partially supported by an NSERC Discovery Grant.

\bibliographystyle{apalike}
\bibliography{library}

\end{document}